\tikzset{%
  line numbers/.store in=\fakelinenos,
  line numbers=50,
  line number shift/.store in=\fakelinenoshift,
  line number shift=5mm,
  line number style/.style={text=gray},
}
\providecommand\@dotsep{5}
\def\listtodoname{List of Todos}
\def\listoftodos{\@starttoc{tdo}\listtodoname}
\numberwithin{equation}{section}
\newcommand{\R}{\mathbb{R}}
\newcommand{\C}{\mathcal{C}}
\newcommand{\dis}{\displaystyle}
\DeclareMathOperator{\dive}{div}
\DeclareMathOperator{\supp}{supp}
\DeclareMathOperator{\B}{\mathcal{B}}
\DeclareMathOperator{\I}{\mathcal{I}}
\DeclareMathOperator{\E}{\mathbb{E}}
\DeclareMathOperator{\e}{\varepsilon}
\DeclareMathOperator{\la}{\lambda}
\DeclareMathOperator{\ri}{\rightarrow}
\DeclareMathOperator{\rih}{\rightharpoonup}
\newtheorem{prop}{Proposition}[section]
\newtheorem{lem}{Lemma}[section]
\newtheorem{thm}{Theorem}[section]
\newtheorem{remark}{Remark}[section]
\begin{document}
\title[Double phase Kirchhoff problems with vanishing potentials]{Nodal solutions for double phase Kirchhoff \\
problems with vanishing potentials}

\author[T. Isernia]{Teresa Isernia}
\address{Teresa Isernia\hfill\break\indent
Dipartimento di Ingegneria Industriale e Scienze Matematiche \hfill\break\indent
Universit\`a Politecnica delle Marche\hfill\break\indent
Via Brecce Bianche, 12\hfill\break\indent
60131 Ancona (Italy)}
\email{t.isernia@univpm.it}

\author[D.D. Repov\v{s}]{Du\v{s}an D. Repov\v{s}}
\address{Du\v{s}an D. Repov\v{s}\hfill\break\indent
Faculty of Education, and Faculty of Mathematics and Physics \hfill\break\indent
University of Ljubljana\hfill\break\indent
\& Institute of Mathematics, Physics and Mechanics \hfill\break\indent
SI-1000 Ljubljana, Slovenia}
\email{dusan.repovs@guest.arnes.si}

\keywords{$(p, q)$-Kirchhoff; nodal solutions; vanishing potentials; Nehari manifold}
\subjclass[2010]{35A15, 35J62}

%\date{\today}

\begin{abstract}
We consider the following $(p, q)$-Laplacian Kirchhoff type problem 
\begin{align*}
\begin{split}
&-\left(a+b\int_{\R^{3}}|\nabla u|^{p}\, dx \right)\Delta_{p}u - \left(c+d\int_{\R^{3}}|\nabla u|^{q}\, dx \right ) \Delta_{q}u \\
&\qquad + V(x) (|u|^{p-2}u + |u|^{q-2}u)= K(x) f(u) \quad \mbox{ in } \R^{3},
\end{split}
\end{align*}
where $a, b, c, d>0$ are constants, $\frac{3}{2}< p< q<3$, $V: \R^{3}\ri \R$ and $K: \R^{3}\ri \R$ are positive continuous functions allowed for vanishing behavior at infinity, and $f$ is a continuous function with quasicritical growth. Using a minimization argument and a quantitative deformation lemma we establish the existence of nodal solutions.
\end{abstract}

\maketitle

\section{Introduction}

\noindent
This paper deals with the existence of least energy nodal solutions for the following class of quasilinear problems
\begin{align}\begin{split}\label{P}
&-\left(a+b\int_{\R^{3}}|\nabla u|^{p}\, dx \right)\Delta_{p}u - \left(c+d\int_{\R^{3}}|\nabla u|^{q}\, dx \right ) \Delta_{q}u \\
&\qquad + V(x) (|u|^{p-2}u + |u|^{q-2}u)= K(x) f(u) \quad \mbox{ in } \R^{3},
\end{split}\end{align}
where $a, b, c, d>0$ are constants, $\frac{3}{2}< p< q<3$, $V: \R^{3}\ri \R$ and $K: \R^{3}\ri \R$ are positive functions,
 and $f$ is a continuous function with quasicritical growth. 

In recent years, a considerable interest has been devoted to the study of this general class of problems due to the fact that they arise in applications in physics and related sciences. 

When $a=c=1$ and $b=d=0$, equation \eqref{P} becomes a $(p, q)$-Laplacian problem of the type
\begin{equation}\label{pq}
-\Delta_{p}u-\Delta_{q} u+V(x) \left(|u|^{p-2}u+|u|^{q-2}u\right)= K(x)f(u) \quad \mbox{ in } \R^{3}. 
\end{equation}
As underlined in \cite{CIL}, this equation is related to the more general reaction-diffusion system
$$
u_{t}=\dive(D(u)\nabla u)+c(x,u) \quad \mbox{ and } \quad D(u)=|\nabla u|^{p-2}+|\nabla u|^{q-2},
$$
which appears in plasma physics, biophysics and chemical reaction design. 
 
In these applications, $u$ represents a concentration, $\dive(D(u) \nabla u)$ is the diffusion with the diffusion coefficient $D(u)$, and the reaction term $c(x, u)$ relates to source and loss processes. 
Usually, the reaction term $c(x, u)$ is a polynomial of $u$ with variable coefficient (see \cite{CIL}). This kind of problem has been widely investigated by many authors, see for instance \cite{CIL, Fig1, Fig2, HL, KR, LG, LL, MP} and the references therein. 
In particular, in \cite{BFJMAA}, using a minimization argument and a quantitative deformation lemma, the authors proved the existence of nodal solutions for the following class of $(p, q)$ problems 
\begin{align*}
-\dive(a(|\nabla u|^{p}) |\nabla u|^{p-2} \nabla u) + V(x) b(|u|^{p})|u|^{p-2}u= K(x)f(u) \quad \mbox{ in } \R^{N}, 
\end{align*}
where $N\geq 3$, $2\leq p<N$, $a, b, f \in \C^{1}(\R)$,
 and $V, K$ are continuous and positive functions (see also \cite{BFANS}).   

We stress that in the nonlocal framework, only few recent works deal with the fractional $(p, q)$-Laplacian. In \cite{ChenBao} the authors established the existence, nonexistence and multiplicity for a nonlocal $(p, q)$-subcritical problem. Ambrosio \cite{AZAA} obtained an existence result for a critical fractional $(p, q)$-problem via mountain pass theorem. In \cite{BM} the authors investigated the existence of infinitely many nontrivial solutions for a class of fractional $(p, q)$-equations involving concave-critical nonlinearities in bounded domains. H\"older regularity result for nonlocal double phase equations has been established in \cite{DFP}. Applying suitable variational and topological arguments, in \cite{AR} the authors obtained a multiplicity and concentration result for a class of fractional problems with unbalanced growth. We also mention \cite{AAI, AIMed, GKS} for other interesting results.

We underline that there is a huge bibliography concerning the nonlinear Schr\"odinger equation (that is when $p=q=2$ in \eqref{pq})
\begin{align}\label{schrodinger}
-\Delta u + V(x) u= K(x) f(u) \quad \mbox{ in } \R^{3},
\end{align}
and we would like to point out that an important class of problems associated with \eqref{schrodinger} is the so called {\it zero mass} case, which occurs when the potential $V$ vanishes at infinity. Using several variational methods, many authors attacked this equation; see for instance \cite{AS, AS1, AFM, AW, BGM, BerLions, BVS}. 
\smallskip

When $a=c$, $b=d(\neq 0)$ and $p=q=2$, problem \eqref{P} becomes the following Kirchhoff equation
\begin{align}\label{Ke}
-\left( a+b \int_{\R^{3}} |\nabla u|^{2} \, dx\right) \Delta u + V(x) u = f(x, u) \quad \mbox{ in } \R^{3}.  
\end{align}
This problem is related to the stationary analogue of the Kirchhoff equation \cite{K} 
\begin{align*}
\rho \frac{\partial^{2}u}{\partial t^{2}} - \left( \frac{p_{0}}{h} + \frac{E}{2L} \int_{0}^{L} \left|\frac{\partial u}{\partial x}\right|^{2} dx \right) \frac{\partial^{2}u}{\partial x^{2}} =0, 
\end{align*}
for all $x\in (0, L)$ and $t\geq 0$. This equation is an extension of the classical D'Alembert wave equation taking into account the changes in the length of the strings produced by transverse vibrations. In \eqref{Ke}, $u(x, t)$ is the lateral displacement of the vibrating string at the coordinate $x$ and the time $t$, $L$ is the length of the string, $h$ is the cross-section area, $E$ is the Young modulus of the material, $\rho$ is the mass density and $p_{0}$ is the initial axial tension. 

The early studies dedicated to the Kirchhoff equation \eqref{Ke} were done by Bernstein \cite{Bernstein} and Pohozaev \cite{Pohozaev}. However, the Kirchhoff equation \eqref{Ke} began to attract the attention of more researchers only after the work by Lions \cite{LionsK}, in which the author introduced a functional analysis approach to study a general Kirchhoff equation in arbitrary dimension with external force term. For more details on classical Kirchhoff problems we refer to \cite{AP, APS, PZ1, PZ}.
In \cite{FJ} the authors established the existence of a least energy nodal solution to the following class of nonlocal Schr\"odinger-Kirchhoff problems
\begin{align*}
M\left( \int_{\R^{3}} |\nabla u|^{2}\, dx + \int_{\R^{3}} V(x) u^{2}\, dx \right) (-\Delta u + V(x) u)= K(x) f(u) \quad \mbox{ in } \R^{3}. 
\end{align*}
Moreover, when the problem presents symmetry, they proved the existence of infinitely many nontrivial solutions. We also mention \cite{DPS, FN} where the existence of nodal solutions for problems like \eqref{Ke} has been obtained.

In the nonlocal framework, Fiscella and Valdinoci \cite{FisVal} proposed the following stationary Kirchhoff model driven by the fractional Laplacian 
\begin{align}\label{FV}
\left\{
\begin{array}{ll}
\displaystyle{-M\left( \iint_{\R^{2N}} \frac{|u(x)- u(y)|^{2}}{|x-y|^{N+2s}}\, dxdy\right) (-\Delta)^{s} u= \la f(x, u) + |u|^{2^{*}_{s}-2} u} &\mbox{ in } \Omega, \\
u=0 &\mbox{ in } \R^{N}\setminus \Omega, 
\end{array}
\right. 
\end{align}
where $\Omega \subset \R^{N}$ is an open bounded set, $2^{*}_{s}= \frac{2N}{N-2s}$, $N>2s$, $s\in (0,1)$, $M:\R^{+}\ri \R^{+}$ is an increasing continuous function which behaves like $M(t)= a+b \,t$, with $b\geq 0$, and {$f$ is a continuous function}. Based on a truncation argument and the mountain pass theorem, the authors established the existence of a non-negative solution to \eqref{FV} for any $\la>\la^{*}>0$, where $\la^{*}$ is an appropriate threshold. 
We also mention \cite{AAsy, AICCM, AIMM, AFP, MRZ, PucciS} in which the authors dealt with existence and multiplicity of solutions for \eqref{FV}, while concerning the existence and multiplicity of sign-changing solutions for fractional Kirchhoff problems only few results appear in the literature \cite{CG, CTL, INA}.  

\smallskip
Finally, if $a=c$, $b=d(\neq 0)$ and $p=q\neq 2$,
 we have the following $p$-Laplacian Kirchhoff-type equation
\begin{align}\label{Kep}
-\left( a+b \int_{\R^{3}} |\nabla u|^{p} \, dx\right) \Delta_{p} u + V(x) |u|^{p-2}u = f(x, u) \quad \mbox{ in } \R^{3}.  
\end{align}
Very recently, in \cite{HMH} using a minimization argument and the Nehari manifold method, the authors investigated the existence of least energy nodal (or sign-changing) solutions to \eqref{Kep}. We also mention \cite{CP, CV, GN, X} for results regarding Schr\"odinger-Kirchhoff equations involving the $p$-Laplacian. 

Motivated by the interest shared by the mathematical community toward $(p, q)$-Laplacian problems, the goal of the present paper is to study the existence of nodal solutions to \eqref{P}. In order to state precisely our main result, we first introduce the main assumptions on the potentials $V$ and $K$ and on the nonlinearity $f$.  

We assume that $V, K: \R^{3}\ri \R$ are continuous functions and we say that $(V, K)\in \mathcal{K}$ if the following conditions are satisfied (see \cite{AS}):

\begin{compactenum}
\item [$(VK_{1})$] $V(x), K(x)>0$ for all $x\in \R^{3}$ and $K\in L^{\infty}(\R^{3})$; 
\item [$(VK_{2})$] If $(\mathcal{A}_{n})\subset \R^{3}$ is a sequence of Borel sets such that the Lebesgue measure $|\mathcal{A}_{n}|\leq R$, for all $n\in \mathbb{N}$ and for some $R>0$, then
\begin{align*}
\lim_{r\ri \infty} \int_{\mathcal{A}_{n}\cap \B_{\varrho}^{c}(0)} K(x)\, dx =0, 
\end{align*}  
uniformly in $n\in \mathbb{N}$, where $\B_{\varrho}^{c}(0):=\R^{3}\setminus \B_{\varrho}(0)$. 
\end{compactenum}
Furthermore, one of the following conditions is satisfied:
\begin{compactenum}
\item [$(VK_{3})$] $\dis \frac{K}{V}\in L^{\infty}(\R^{3})$;
\end{compactenum}
or
\begin{compactenum}
\item [$(VK_{4})$] There exists $m\in (q, q^{*})$ such that 
\begin{align*}
\frac{K(x)}{V(x)^{\frac{q^{*}-m}{q^{*}-p}}}\ri 0 \quad \mbox{ as } |x|\ri \infty. 
\end{align*}
\end{compactenum}

Let us point out that the hypotheses on the functions $V$ and $K$ characterize problem \eqref{P} as a {\it zero mass} problem. 

Regarding the nonlinearity $f$, we assume that $f\in \C(\R, \R)$ and $f$ fulfills the following conditions: 
\begin{compactenum}
\item [$(f_{1})$] $\dis \lim_{|t|\ri 0} \frac{f(t)}{|t|^{2p-1}} =0$ if $(VK_{3})$ holds, 
\item [$(\tilde{f}_{1})$] $\dis \lim_{|t|\ri 0} \frac{f(t)}{|t|^{m-1}} =0$ if $(VK_{4})$ holds, with $m\in (q, q^{*})$ defined in $(VK_{4})$, 
\item [$(f_{2})$] $\dis \lim_{|t|\ri \infty} \frac{f(t)}{|t|^{q^{*}-1}}=0$,
\item [$(f_{3})$] $\dis \lim_{t\ri \infty} \frac{F(t)}{t^{2q}}=\infty$, where $\dis F(t):=\int_{0}^{t} f(\tau)\, d\tau$,  
\begin{comment}
there exists $\vartheta \in (2q, q^{*})$ such that
\begin{align*}
0<\vartheta F(t)\leq f(t)t \quad \mbox{ for all } |t|>0, \mbox{ where } F(t):=\int_{0}^{t} f(\tau)\, d\tau, 
\end{align*}
\end{comment}
\item [$(f_{4})$] The map $\dis t\mapsto \frac{f(t)}{|t|^{2q-1}}$ is strictly increasing for all $|t|>0$. 
\end{compactenum}

We note that from assumption $(f_{4})$ it follows that $t\mapsto \frac{1}{2q} f(t)t-F(t)$ is increasing for $t\geq 0$ and also that $t\mapsto \frac{1}{2q} f(t)t-F(t)$ is decreasing for $t\leq 0$ (see Remark \ref{rem1} below). 
\medskip

Our main result can be stated as follows: 
\begin{thm}\label{thm1}
Assume that $(V, K)\in \mathcal{K}$ and $f$ satisfies conditions $(f_{1})$ $($or $(\tilde{f}_{1}))$ and $(f_{2})$-$(f_{4})$. Then problem \eqref{P} admits a least energy sign-changing weak solution. If in addition, $f$ is an odd function, then \eqref{P} has infinitely many nontrivial solutions. 
\end{thm}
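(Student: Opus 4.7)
The plan is to recast \eqref{P} variationally on a reflexive Banach space $X$ whose norm combines the natural $W^{1,p}$ and $W^{1,q}$ contributions weighted by $V$, and associate with \eqref{P} the $\C^{1}$ Euler--Lagrange functional
\begin{align*}
I(u) &= \frac{a}{p}\int_{\R^{3}}|\nabla u|^{p}\,dx + \frac{b}{2p}\Bigl(\int_{\R^{3}}|\nabla u|^{p}\,dx\Bigr)^{2} + \frac{c}{q}\int_{\R^{3}}|\nabla u|^{q}\,dx + \frac{d}{2q}\Bigl(\int_{\R^{3}}|\nabla u|^{q}\,dx\Bigr)^{2} \\
&\quad + \int_{\R^{3}}V(x)\Bigl(\tfrac{1}{p}|u|^{p}+\tfrac{1}{q}|u|^{q}\Bigr)dx - \int_{\R^{3}}K(x)F(u)\,dx.
\end{align*}
The first step is to verify, following the zero-mass framework of Alves--Souto \cite{AS}, that assumptions $(VK_{1})$--$(VK_{2})$ together with either $(VK_{3})$ or $(VK_{4})$ yield a compact embedding $X\hookrightarrow L^{r}_{K}(\R^{3})$ for the exponents dictated by $(f_{1})/(\tilde f_{1})$ and $(f_{2})$, so that $I\in\C^{1}(X,\R)$ and the nonlinear term behaves compactly despite the vanishing of $V$.

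Next, for any $u\in X$ with $u^{\pm}\neq 0$ I would study the two-parameter function $h(s,t)=I(su^{+}+tu^{-})$ on $(0,\infty)^{2}$. Because $u^{+}$ and $u^{-}$ have disjoint supports one has $|\nabla(su^{+}+tu^{-})|^{r}=s^{r}|\nabla u^{+}|^{r}+t^{r}|\nabla u^{-}|^{r}$ for $r\in\{p,q\}$, so the only coupling between $s$ and $t$ comes through the two Kirchhoff squares $(s^{p}A_{p}+t^{p}B_{p})^{2}$ and $(s^{q}A_{q}+t^{q}B_{q})^{2}$, where $A_{r}:=\|\nabla u^{+}\|_{r}^{r}$ and $B_{r}:=\|\nabla u^{-}\|_{r}^{r}$. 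Using growth conditions $(f_{1})/(\tilde f_{1})$--$(f_{3})$ to control the endpoints, and the strict monotonicity $(f_{4})$ of $t\mapsto f(t)/|t|^{2q-1}$ to rule out spurious critical points of $h$ even in the presence of the Kirchhoff coupling, I would prove that there is a unique pair $(s_{u},t_{u})\in(0,\infty)^{2}$ such that $s_{u}u^{+}+t_{u}u^{-}$ lies in the nodal Nehari set
\begin{equation*}
\mathcal{M}=\{u\in X:u^{\pm}\neq 0,\ \langle I'(u),u^{+}\rangle=\langle I'(u),u^{-}\rangle=0\},
\end{equation*}
and that $(s_{u},t_{u})$ is in fact the unique global maximum of $h$ on $(0,\infty)^{2}$.

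The main obstacle, as usual in the vanishing-potential setting, is to show that $c_{*}:=\inf_{\mathcal{M}}I$ is positive and attained. A lower bound $\|u^{\pm}\|\geq\sigma>0$ on $\mathcal{M}$ follows by combining $(f_{1})/(\tilde f_{1})$--$(f_{2})$ with Sobolev embeddings, which yields $c_{*}>0$. Boundedness of a minimizing sequence $\{u_{n}\}\subset\mathcal{M}$ is obtained from $(f_{3})$ and the fact that the Kirchhoff terms scale like $2p,2q$ while $F$ grows no faster than a subcritical power; since $2q<q^{*}$, the combination $I(u_{n})-\frac{1}{2q}\langle I'(u_{n}),u_{n}\rangle$ controls both $\|u_{n}\|$ and $\int K(x)[\tfrac{1}{2q}f(u_{n})u_{n}-F(u_{n})]\,dx$ via Remark \ref{rem1}. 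After extracting a weak limit $u_{0}$, compactness of $X\hookrightarrow L^{r}_{K}$ lets me pass to the limit in the nonlinear term; the crux is to prove $u_{0}^{\pm}\neq 0$, which is done by evaluating the Nehari identities on $u_{n}^{\pm}$, invoking the quantitative lower bound $\|u_{n}^{\pm}\|\geq\sigma$, and ruling out vanishing via $(VK_{2})$. A Brezis--Lieb/Fatou argument then identifies $u_{0}$ as an element of $\mathcal{M}$ with $I(u_{0})\leq c_{*}$, hence a minimizer.

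Finally I would show that the minimizer $u_{0}$ is a critical point of $I$ by the quantitative deformation lemma, in the style of Alves--Souto and \cite{BFJMAA, FJ, HMH}. Assuming $I'(u_{0})\neq 0$, one builds a deformation $\eta(\tau,\cdot)$ decreasing $I$ near $u_{0}$; the uniqueness-of-projection statement established above then guarantees that the map $(s,t)\mapsto\eta(1,su_{0}^{+}+tu_{0}^{-})$ meets $\mathcal{M}$ at a point with energy strictly below $c_{*}$, via a Miranda-type degree argument on the square $(s,t)\in[1-\delta,1+\delta]^{2}$. This contradiction yields $I'(u_{0})=0$ and proves the first claim. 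When $f$ is odd, $I$ is even and $\mathcal{M}$ is symmetric; a $\mathbb{Z}_{2}$-equivariant version of the same scheme, combined with a Krasnoselskii-genus/fountain-type argument on the symmetric nodal Nehari manifold, produces an unbounded sequence of critical values, hence infinitely many nontrivial solutions.
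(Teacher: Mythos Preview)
Your outline for the first assertion (existence of a least energy nodal solution) tracks the paper closely: the same functional on the same space, the same nodal Nehari set $\mathcal{M}$, projection onto $\mathcal{M}$ via the unique maximum of the two-parameter map, minimization on $\mathcal{M}$ using the compact embedding into $L^{r}_{K}$, and a quantitative deformation plus degree argument to conclude. Two differences are worth flagging. First, you correctly retain the Kirchhoff cross-coupling $(s^{p}A_{p}+t^{p}B_{p})^{2}$ and $(s^{q}A_{q}+t^{q}B_{q})^{2}$ between $u^{+}$ and $u^{-}$, whereas the paper (see \eqref{somma} and the displayed formulas that follow it) treats $\I(\xi u^{+}+\lambda u^{-})$ as fully decoupled into $\I(\xi u^{+})+\I(\lambda u^{-})$; your route is more robust but requires the uniqueness argument behind Lemma~\ref{lem3.2} to be carried out with the mixed terms present, which can be done using $(f_{4})$ in the spirit of \cite{DPS,HMH}. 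Second, for the deformation step you plan a Miranda-type argument, while the paper explicitly avoids Miranda and instead computes a Brouwer degree for a regularized map $\psi_{\varepsilon}$ and then runs an iterative shrinking-squares procedure to force the zero to be $(1,1)$; the two devices are interchangeable here.

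The real gap is your treatment of the odd case. You propose a Krasnoselskii-genus or fountain argument \emph{on the nodal set $\mathcal{M}$}, but $\mathcal{M}$ is not a $\C^{1}$ manifold and has no evident structure on which Ljusternik--Schnirelmann theory applies directly; making this work would require substantial extra machinery (invariant sets of descent flow, for instance) that you do not indicate. The paper's route is both different and simpler: it abandons $\mathcal{M}$ entirely and uses the Szulkin--Weth device (Lemma~\ref{lemz1}$(c)$ and Proposition~\ref{propz2}) to pull $\I|_{\mathcal{N}}$ back to the even $\C^{1}$ functional $\psi=\I\circ m$ on the unit sphere $\mathbb{S}$, checks that $\psi$ is bounded below and satisfies the Palais--Smale condition (via Lemma~\ref{lem2} and Lemma~\ref{lemconvergFf}), and then invokes the classical even-functional multiplicity result of \cite{Rab}. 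Note that the resulting infinitely many solutions are only asserted to be nontrivial, not sign-changing---which is exactly what the second sentence of the theorem claims---so working on $\mathcal{N}$ rather than $\mathcal{M}$ is enough.
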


A {\it weak solution} of problem \eqref{P} is a function $u\in \E$ such that 
\begin{align}\begin{split}\label{ws}
&a\int_{\R^{3}} |\nabla u|^{p-2} \nabla u \cdot \nabla \varphi \, dx + b \left(\int_{\R^{3}} |\nabla u|^{p}\, dx \right) \int_{\R^{3}} |\nabla u|^{p-2} \nabla u \cdot \nabla \varphi\, dx\\
&\quad +c\int_{\R^{3}} |\nabla u|^{q-2} \nabla u \cdot \nabla \varphi \, dx + d \left(\int_{\R^{3}} |\nabla u|^{q}\, dx \right) \int_{\R^{3}} |\nabla u|^{q-2} \nabla u \cdot \nabla \varphi\, dx\\
&\quad +\int_{\R^{3}} V(x)(|u|^{p-2} u\varphi + |u|^{q-2}u\varphi)\, dx - \int_{\R^{3}} K(x)f(u) \varphi\, dx=0
\end{split}\end{align}
for all $\varphi \in \E$, where 
\begin{align*}
\E= \left\{u\in \mathcal{D}^{1, p}(\R^{3})\cap \mathcal{D}^{1, q}(\R^{3}) \, : \, \int_{\R^{3}} V(x) (|u|^{p}+ |u|^{q})\, dx <\infty \right\}. 
\end{align*}
By a sign-changing weak solution to problem \eqref{P} we mean a function $u\in \E$ that satisfies \eqref{ws} with $u^{+}=\max\{u, 0\}\neq 0$ and $u^{-}=\min\{u, 0\}\neq 0$. 

The proof of Theorem \ref{thm1} is achieved by using suitable variational techniques inspired by \cite{AS2, BFJMAA, BFANS, FJ}. In order to study \eqref{P} we consider the following functional $\I:\E\ri \R$ given by 
\begin{align*}
\I(u)&= \frac{a}{p} \int_{\R^{3}} |\nabla u|^{p}\, dx + \frac{b}{2p} \left( \int_{\R^{3}} |\nabla u|^{p}\, dx \right)^{2} + \frac{c}{q} \int_{\R^{3}} |\nabla u|^{q}\, dx + \frac{d}{2q} \left( \int_{\R^{3}} |\nabla u|^{q}\, dx \right)^{2}\\
&\quad +\int_{\R^{3}} V(x)\left(\frac{1}{p}|u|^{p} + \frac{1}{q}|u|^{q}\right)\, dx - \int_{\R^{3}} K(x)F(u)\, dx.
\end{align*} 
It is easy to check that $\I\in \C^{1}(\E, \R)$ and its differential is given by 
\begin{align*}
\langle \I'(u), \varphi \rangle &=  a\int_{\R^{3}} |\nabla u|^{p-2} \nabla u \cdot \nabla \varphi \, dx + b \left(\int_{\R^{3}} |\nabla u|^{p}\, dx \right) \int_{\R^{3}} |\nabla u|^{p-2} \nabla u \cdot \nabla \varphi\, dx\\
&\quad +c\int_{\R^{3}} |\nabla u|^{q-2} \nabla u \cdot \nabla \varphi \, dx + d \left(\int_{\R^{3}} |\nabla u|^{q}\, dx \right) \int_{\R^{3}} |\nabla u|^{q-2} \nabla u \cdot \nabla \varphi\, dx\\
&\quad +\int_{\R^{3}} V(x)(|u|^{p-2} u\varphi + |u|^{q-2}u\varphi)\, dx - \int_{\R^{3}} K(x)f(u) \varphi\, dx.
\end{align*}
Then, we define the nodal set 
\begin{equation*}
\mathcal{M}= \left\{ w \in \mathcal{N}\, : \, w^{\pm}\neq 0, \, \langle \I'(w), w^{+}\rangle = \langle \I'(w), w^{-}\rangle=0\right\},  
\end{equation*}
where 
\begin{equation*}
\mathcal{N}= \left\{ u\in \E\setminus \{0\} \, : \, \langle \I'(u), u \rangle=0 \right\}.  
\end{equation*} 
In order to get least energy nodal (or sign-changing) solutions to \eqref{P}, we minimize the functional $\I$ on the nodal set $\mathcal{M}$. Then we prove that the minimum is achieved and, by using a variant of the quantitative deformation lemma, we show that it is a critical point of $\I$. Finally, when the nonlinearity $f$ is odd, we obtain the existence of infinitely many nontrivial weak solutions not necessarily nodals.
We point out that our paper extends the results obtained in \cite{FJ, HMH}. 

Problem \eqref{P} is called nonlocal due to the presence of the Kirchhoff term $\left(\int_{\R^{3}} |\nabla u|^{p}\, dx\right)\Delta_{p}u$, this causes some mathematical difficulties which makes the study of such a class of problems particularly interesting. We underline that here we are considering the sum of two Kirchhoff terms: $\left(\int_{\R^{3}} |\nabla u|^{p}\, dx\right)\Delta_{p}u$ and $\left(\int_{\R^{3}} |\nabla u|^{q}\, dx\right)\Delta_{q}u$, with $p<q$. 
Moreover, due to the fact that the nonlinearity $f$ is only continuous, one cannot apply standard $\C^{1}$-Nehari manifold arguments due to the lack of differentiability of the associated Nehari manifold $\mathcal{N}$. We were able to overcome this difficulty 
by
borrowing some abstract critical point results obtained in \cite{SW}. Furthermore, to produce nodal solutions, instead of using the Miranda Theorem to get critical points of $g_{u}(\xi, \la)= \I(\xi u^{+}+\la u^{-})$ we use an iterative process to build a sequence which converges to a critical point of $g_{u}(\xi, \la)$.

The paper is organized as follows. In Section \ref{sect2} we introduce the variational structure. In Section \ref{sect3} we give some preliminary results which overcome the lack of differentiability of the Nehari manifold. Section \ref{sect4} is devoted to some technical lemmas used in the proof of the main result. In Section \ref{sect5} we prove Theorem \ref{thm1}. 

\subsection{Notations}
We denote by $\B_{R}(x)$ the ball of radius $R$ with center $x$ and we set $\B_{R}^{c}(x)=\R^{3}\setminus \B_{R}(x)$.  
Let $1\leq r\leq \infty$ and $A\subset \R^{3}$. We denote by $|u|_{L^{r}(A)}$ the $L^{r}(A)$-norm of the function $u:\R^{3}\ri \R$ belonging to $L^{r}(A)$. When $A=\R^{3}$, we 
shall
 simply write $|u|_{r}$. 

\section{Variational framework}\label{sect2}

\noindent
Let us introduce the space
\begin{align*}
\E= \left\{u\in \mathcal{D}^{1, p}(\R^{3})\cap \mathcal{D}^{1, q}(\R^{3}) \, : \, \int_{\R^{3}} V(x) (|u|^{p}+ |u|^{q})\, dx <\infty \right\}
\end{align*}
endowed with the norm 
\begin{align*}
\|u\|= \left(\int_{\R^{3}} \left(a|\nabla u|^{p}+ V(x)|u|^{p}\right)\, dx\right)^{\frac{1}{p}} + \left(\int_{\R^{3}} \left(c|\nabla u|^{q}+ V(x)|u|^{q}\right)\, dx\right)^{\frac{1}{q}}. 
\end{align*}
\begin{comment}
\begin{align*}
\|u\|:= \|u\|_{V, p} + \|u\|_{V, q}, 
\end{align*}
where
{\color{red}{\begin{align*}
\|u\|_{V, t}^{t}:=\mu \int_{\R^{3}} |\nabla u|^{t}\, dx+ \int_{\R^{3}} V(x) |u|^{t}\, dx, \quad t\in \{p, q\}, \mu \in \{a, c\}. 
\end{align*}}}
\end{comment}

Let us define the Lebesgue space
\begin{align*}
L^{r}_{K}(\R^{3})= \left\{ u:\R^{3}\ri \R \, : \, u \mbox{ is measurable and } \int_{\R^{3}} K(x) |u|^{r}\, dx <\infty \right\}
\end{align*}
equipped with the norm
\begin{equation*}
\|u\|_{L^{r}_{K}(\R^{3})}= \left( \int_{\R^{3}} K(x)|u|^{r}\, dx \right)^{\frac{1}{r}}. 
\end{equation*}

We recall the following continuous and compactness results whose proofs can be found in \cite{BFJMAA}:

\begin{lem}\label{lem1}
Assume that $(V, K)\in \mathcal{K}$. 
\begin{compactenum}[$(i)$]
\item If $(VK_{3})$ holds, then $\E$ is continuously embedded in $L^{r}_{K}(\R^{3})$ for every $r\in [q, q^{*}]$. 
\item If $(VK_{4})$ holds, then $\E$ is continuously embedded in $L^{m}_{K}(\R^{3})$.
\end{compactenum}
\end{lem}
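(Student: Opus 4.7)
The plan is to prove each part by a direct comparison between the controlling quantities of $\|u\|$ and the weighted $L^r_K$ norm, exploiting the Sobolev embeddings $\mathcal{D}^{1,p}(\R^3)\hookrightarrow L^{p^*}(\R^3)$ and $\mathcal{D}^{1,q}(\R^3)\hookrightarrow L^{q^*}(\R^3)$ together with the hypothesis on $K/V$.

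For part $(i)$, I would first verify the two endpoints $r=q$ and $r=q^*$. At $r=q$, hypothesis $(VK_3)$ directly yields
\[
\int_{\R^{3}} K(x)|u|^{q}\,dx \;\le\; \Bigl|\tfrac{K}{V}\Bigr|_{\infty}\int_{\R^{3}} V(x)|u|^{q}\,dx \;\le\; C\|u\|^{q}.
\]
At $r=q^{*}$, since $K\in L^{\infty}(\R^{3})$ by $(VK_{1})$, the Sobolev embedding of $\mathcal{D}^{1,q}(\R^{3})$ into $L^{q^{*}}(\R^{3})$ gives
\[
\int_{\R^{3}} K(x)|u|^{q^{*}}\,dx \;\le\; |K|_{\infty}\int_{\R^{3}}|u|^{q^{*}}\,dx \;\le\; C\|u\|^{q^{*}}.
\]
For intermediate $r\in(q,q^{*})$ I would write $r=\theta q+(1-\theta)q^{*}$ and apply H\"older's inequality to factor $K|u|^{r}=(K|u|^{q})^{\theta}(K|u|^{q^{*}})^{1-\theta}$ (using $K^{\theta}\cdot K^{1-\theta}=K$), obtaining $\|u\|_{L^{r}_{K}}\le \|u\|_{L^{q}_{K}}^{\theta}\|u\|_{L^{q^{*}}_{K}}^{1-\theta}$, which combined with the endpoint bounds delivers the continuous embedding.

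For part $(ii)$, the key idea is to use $(VK_4)$ together with a well-chosen interpolation of $|u|^m$ between $|u|^p$ (controlled by the weighted part of $\|u\|$) and $|u|^{q^*}$ (controlled by Sobolev). Set $\alpha=\frac{q^{*}-m}{q^{*}-p}\in(0,1)$, so that $p\alpha+q^{*}(1-\alpha)=m$. Then decompose
\[
K(x)|u|^{m} \;=\; \bigl(V(x)|u|^{p}\bigr)^{\alpha}\,\cdot\, \frac{K(x)}{V(x)^{\alpha}}\,|u|^{q^{*}(1-\alpha)},
\]
and apply H\"older with exponents $1/\alpha$ and $1/(1-\alpha)$ to get
\[
\int_{\R^{3}} K(x)|u|^{m}\,dx \;\le\; \Bigl(\int_{\R^{3}} V(x)|u|^{p}\,dx\Bigr)^{\alpha}\Bigl(\int_{\R^{3}} \bigl(K/V^{\alpha}\bigr)^{\frac{1}{1-\alpha}}|u|^{q^{*}}\,dx\Bigr)^{1-\alpha}.
\]
By $(VK_{4})$ the function $K/V^{\alpha}$ vanishes at infinity, and being continuous on $\R^{3}$ it is bounded, so the second integral is controlled by $\bigl(\sup K/V^{\alpha}\bigr)^{\frac{1}{1-\alpha}}\int |u|^{q^{*}}\le C\|u\|^{q^{*}}$ via Sobolev. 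The first factor is bounded by $\|u\|^{p\alpha}$. Multiplying out gives $\|u\|^{p\alpha+q^{*}(1-\alpha)}=\|u\|^{m}$, which is precisely the continuous embedding $\E\hookrightarrow L^{m}_{K}(\R^{3})$.

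The mild obstacle is purely bookkeeping: one must pick the right interpolation exponent in each part so that the power of $V$ attached to $|u|^{p}$ equals $1$ (so it is absorbed by $\|u\|$) and the residual weight on $|u|^{q^{*}}$ matches exactly $K/V^{\alpha}$ with the $\alpha$ dictated by $(VK_{4})$; the choice $\alpha=(q^{*}-m)/(q^{*}-p)$ is the unique one that makes both conditions consistent.
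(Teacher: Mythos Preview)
The paper does not give its own proof of this lemma; it simply records the statement and refers to \cite{BFJMAA}. Your argument is correct and is essentially the standard one: endpoint bounds at $r=q$ (via $K\le |K/V|_{\infty}V$) and $r=q^{*}$ (via $K\in L^{\infty}$ and Sobolev) plus interpolation for part $(i)$, and for part $(ii)$ the H\"older splitting with the unique exponent $\alpha=(q^{*}-m)/(q^{*}-p)$ that simultaneously makes the $V$-weight on $|u|^{p}$ equal to $1$ and produces the residual factor $K/V^{\alpha}$ matching $(VK_{4})$.

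One cosmetic slip in part $(i)$: with the parametrization $r=\theta q+(1-\theta)q^{*}$, H\"older yields
\[
\|u\|_{L^{r}_{K}}^{\,r}\;\le\;\|u\|_{L^{q}_{K}}^{\,q\theta}\,\|u\|_{L^{q^{*}}_{K}}^{\,q^{*}(1-\theta)},
\]
so after taking the $r$-th root the exponents are $q\theta/r$ and $q^{*}(1-\theta)/r$, not $\theta$ and $1-\theta$ as written. (The inequality $\|u\|_{L^{r}_{K}}\le\|u\|_{L^{q}_{K}}^{\theta}\|u\|_{L^{q^{*}}_{K}}^{1-\theta}$ corresponds to the other parametrization $1/r=\theta/q+(1-\theta)/q^{*}$.) Either way the exponents are positive and sum to $1$, so the continuous embedding follows; this does not affect the validity of your proof.
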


\begin{lem}\label{lem2}
Assume that $(V, K)\in \mathcal{K}$. 
\begin{compactenum}[$(i)$]
\item If $(VK_{3})$ holds, then $\E$ is compactly embedded in $L^{r}_{K}(\R^{3})$ for every $r\in (q, q^{*})$. 
\item If $(VK_{4})$ holds, then $\E$ is compactly embedded in $L^{m}_{K}(\R^{3})$.
\end{compactenum}
\end{lem}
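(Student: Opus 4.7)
The plan is to follow the standard truncation-plus-tail strategy for weighted compact embeddings. I would start with a bounded sequence $(u_{n}) \subset \E$ and, passing to a subsequence, write $u_{n} \rih u$ in $\E$; it suffices to show that $v_{n} := u_{n} - u \to 0$ in the relevant weighted Lebesgue space, and $v_{n}$ is itself bounded in $\E$. Local compactness comes for free: bounded subsets of $\E$ inject continuously into $W^{1,q}(\B_{\varrho}(0))$ for every $\varrho > 0$, so Rellich--Kondrachov delivers, along a diagonal subsequence, $v_{n} \to 0$ in $L^{s}(\B_{\varrho}(0))$ for every $s \in [1, q^{*})$; continuity of $K$ forces $K \in L^{\infty}(\B_{\varrho}(0))$, upgrading this to $L^{s}_{K}(\B_{\varrho}(0))$ with $s = r$ in part (i) and $s = m$ in part (ii). Hence everything reduces to proving the uniform tail estimate $\int_{\B_{\varrho}^{c}(0)} K|v_{n}|^{s}\, dx \to 0$ as $\varrho \to \infty$.

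For part (i), fix $r \in (q, q^{*})$ and parameters $0 < \delta < M$, and decompose the tail according to the three level sets $\{|v_{n}| \le \delta\}$, $\{\delta < |v_{n}| \le M\}$, $\{|v_{n}| > M\}$. On the first I would use $|v_{n}|^{r} \le \delta^{r-q}|v_{n}|^{q}$ combined with $(VK_{3})$ (so $K|v_{n}|^{q} \le \|K/V\|_{\infty} V|v_{n}|^{q}$), bounding the contribution by $C\delta^{r-q}\sup_{n}\|v_{n}\|^{q}$. On the third I would use $|v_{n}|^{r} \le M^{r-q^{*}}|v_{n}|^{q^{*}}$ with $K \in L^{\infty}(\R^{3})$ and the Sobolev embedding $\mathcal{D}^{1,q}(\R^{3}) \hookrightarrow L^{q^{*}}(\R^{3})$, yielding $CM^{r-q^{*}}\sup_{n}|v_{n}|_{q^{*}}^{q^{*}}$. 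For any $\varepsilon > 0$ these two extreme pieces can thus be made $< \varepsilon/3$ by first choosing $\delta$ small and $M$ large, \emph{uniformly in $n$}. With $\delta, M$ now frozen, the Chebyshev estimate $|\{|v_{n}|>\delta\}| \le \delta^{-q^{*}}|v_{n}|_{q^{*}}^{q^{*}}$ provides a uniform bound on the Lebesgue measure of the family $\mathcal{A}_{n} := \{|v_{n}|>\delta\}$, so $(VK_{2})$ delivers $\int_{\B_{\varrho}^{c}(0) \cap \mathcal{A}_{n}} K\, dx \to 0$ uniformly in $n$ as $\varrho \to \infty$, and the middle piece is at most $M^{r}$ times this vanishing quantity.

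For part (ii) I would set $\theta := (q^{*}-m)/(q^{*}-p) \in (0,1)$, which satisfies $\theta p + (1-\theta)q^{*} = m$, and put $\eta(x) := K(x)/V(x)^{\theta}$; then $(VK_{4})$ reads $\eta(x) \to 0$ as $|x| \to \infty$. Factoring $|v_{n}|^{m} = (|v_{n}|^{p})^{\theta}(|v_{n}|^{q^{*}})^{1-\theta}$ and applying H\"older with conjugate exponents $1/\theta$ and $1/(1-\theta)$ gives
\begin{align*}
\int_{\B_{\varrho}^{c}(0)} K|v_{n}|^{m}\, dx \le \left( \int_{\B_{\varrho}^{c}(0)} V|v_{n}|^{p}\, dx \right)^{\theta} \left( \int_{\B_{\varrho}^{c}(0)} \eta^{\frac{1}{1-\theta}} |v_{n}|^{q^{*}}\, dx \right)^{1-\theta}.
\end{align*}
The first factor is dominated by $\sup_{n}\|v_{n}\|^{\theta p}$, and the second by $\bigl(\sup_{\B_{\varrho}^{c}(0)} \eta\bigr)\sup_{n}|v_{n}|_{q^{*}}^{q^{*}(1-\theta)}$, which vanishes as $\varrho \to \infty$ uniformly in $n$. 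Combining local strong convergence with the uniform tail control in either case closes the argument via an $\varepsilon/3$-splitting. The main obstacle I anticipate is the three-parameter juggling in part (i): $\delta$, $M$, and $\varrho$ must be chosen in exactly that order, so that the two extreme level-set contributions are killed by pure interpolation plus the global bounds, leaving the middle level set---whose measure is uniformly bounded precisely because $\delta$ has already been fixed---for the single application of the crucial hypothesis $(VK_{2})$.
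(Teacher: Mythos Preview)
Your proposal is correct and follows exactly the standard Alves--Souto compactness argument; the paper does not give its own proof of this lemma but cites \cite{BFJMAA}, where the proof is carried out along the very lines you describe (level-set trisection with $(VK_{2})$ controlling the middle piece in case $(VK_{3})$, and the interpolation/H\"older trick with $\theta=(q^{*}-m)/(q^{*}-p)$ in case $(VK_{4})$). There is nothing to add.
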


The last lemma of this section is a compactness result related to the nonlinearity (see \cite{BFJMAA}).

\begin{lem}\label{lemconvergFf} 
Assume that $(V, K)\in \mathcal{K}$ and $f$ satisfies $(f_{1})$-$(f_{2})$ or $(\tilde{f}_{1})$-$(f_{2})$. If $(u_{n})$ is a sequence such that $u_{n}\rih u$ in $\E$, then
\begin{align*}
\int_{\R^{3}} K(x)F(u_{n})\, dx \ri \int_{\R^{3}} K(x)F(u)\, dx 
\end{align*}
and 
\begin{align*}
\int_{\R^{3}} K(x)f(u_{n})u_{n}\, dx \ri \int_{\R^{3}} K(x)f(u)u\, dx.
\end{align*}
\end{lem}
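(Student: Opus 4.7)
The strategy is to combine the a.e.\ convergence extracted from $u_{n}\rih u$ with the compact embedding $\E\hookrightarrow L^{r}_{K}(\R^{3})$ of Lemma \ref{lem2} and the tail-smallness hypothesis $(VK_{2})$, thereby reducing both integrals to a dominated-convergence argument on a large ball. First I would pass to a subsequence so that $u_{n}\to u$ almost everywhere in $\R^{3}$ and note that $(u_{n})$ stays bounded in $\E$ and, via Lemma \ref{lem1}, in every Lebesgue space that the growth exponents for $f$ require; the same holds for $u$.

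From $(f_{1})$--$(f_{2})$ (respectively $(\tilde f_{1})$--$(f_{2})$) I would derive the standard two-sided bounds
\[
|f(t)t|+|F(t)|\le \e\,|t|^{a}+C_{\e}|t|^{q^{*}},\qquad |f(t)t|+|F(t)|\le C\bigl(|t|^{a}+|t|^{q^{*}}\bigr),
\]
with $a=2p$ under $(VK_{3})$ and $a=m$ under $(VK_{4})$. These are the only analytic ingredients extracted from $f$; everything else rests on weight and measure properties.

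Fix $\e>0$ and split $\R^{3}=\B_{R}(0)\cup \B_{R}^{c}(0)$. On the tail, decompose further according to the size of $|u_{n}|$ into $\{|u_{n}|\le\delta\}$, a moderate region $\mathcal{A}_{n}=\{\delta<|u_{n}|<M\}$, and $\{|u_{n}|\ge M\}$. On the small-value region I use the $\e$-bound near $0$ together with the uniform $L^{a}_{K}(\R^{3})$-boundedness from Lemma \ref{lem1}; on the large-value region I use the $\e$-bound at infinity together with $K\in L^{\infty}(\R^{3})$ and the Sobolev embedding $\mathcal{D}^{1,q}(\R^{3})\hookrightarrow L^{q^{*}}(\R^{3})$. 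For the moderate region, Chebyshev gives $|\mathcal{A}_{n}|\le \delta^{-q^{*}}|u_{n}|_{q^{*}}^{q^{*}}\le C(\delta)$ uniformly in $n$, so hypothesis $(VK_{2})$ produces $R=R(\e)$ with $\int_{\mathcal{A}_{n}\cap \B_{R}^{c}(0)} K(x)\,dx<\e$ uniformly in $n$; since $f$ and $F$ are bounded on $[-M,M]$, on $\mathcal{A}_{n}$ the integrands are pointwise dominated by a constant multiple of $K(x)$, closing that piece.

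On the fixed ball $\B_{R}(0)$ the continuity of $K$ makes $K$ bounded, so local Rellich--Kondrachov applied to $\mathcal{D}^{1,q}(\B_{R}(0))$ together with the $\e$-growth bound above lets one invoke Vitali's convergence theorem to pass to the limit in $\int_{\B_{R}(0)} K(x) F(u_{n})\,dx$ and $\int_{\B_{R}(0)} K(x) f(u_{n}) u_{n}\,dx$. Running the same tail analysis with $u$ in place of $u_{n}$ (the same bounds apply since $u\in \E$) completes an $\e/3$-assembly yielding both claimed convergences. The main obstacle is precisely the moderate zone $\mathcal{A}_{n}$ on the tail, where smallness of $f$ is unavailable and one must rely on $(VK_{2})$ applied to its uniformly bounded Lebesgue measure.
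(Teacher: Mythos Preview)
The paper does not actually prove this lemma; it merely states it and refers the reader to \cite{BFJMAA} for the proof.  Your outline is precisely the standard argument used in that reference (and in the earlier paper \cite{AS} on which the $(VK)$ framework is modeled): growth bounds from $(f_{1})/(\tilde f_{1})$--$(f_{2})$, the three-level splitting $\{|u_{n}|\le\delta\}\cup\{\delta<|u_{n}|<M\}\cup\{|u_{n}|\ge M\}$ on $\B_{R}^{c}(0)$ with $(VK_{2})$ handling the moderate region, and Vitali/Rellich on the ball.  So in substance your approach coincides with the one the paper invokes by citation.

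Two minor points to tighten.  First, you pass to an a.e.\ convergent subsequence, but the lemma asserts convergence of the full sequence; close the argument by the standard observation that every subsequence has a further subsequence along which the limit is the same identified quantity.  Second, when you invoke Vitali on $\B_{R}(0)$, the bound $|f(t)t|\le C(|t|^{a}+|t|^{q^{*}})$ alone does not give uniform integrability because $(u_{n})$ is only bounded in $L^{q^{*}}$; you need to use the sharper form coming from $(f_{2})$, namely that for every $\e>0$ one has $|f(t)t|\le \e|t|^{q^{*}}+C_{\e}$, which does yield equi-integrability from $L^{q^{*}}$-boundedness.  With these clarifications your proof is complete.
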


We conclude this section
by
 giving the following useful remarks. 
\begin{remark}\label{rem1}
Let us point out that from assumption $(f_{4})$ it follows that $t\mapsto \frac{1}{2q} f(t)t-F(t)$ is increasing for $t\geq 0$. Indeed, let $0<t_{2}<t_{1}$, then using $(f_{4})$ twice we get
\begin{align*}
\frac{1}{2q}f(t_{1})t_{1}- F(t_{1})&= \frac{1}{2q} f(t_{1}) t_{1}- F(t_{2})- \int_{t_{2}}^{t_{1}} f(\tau)\, d\tau \\
&=\frac{1}{2q} f(t_{1}) t_{1}- F(t_{2}) - \int_{t_{2}}^{t_{1}} \frac{f(\tau)}{\tau^{2q-1}} \, \tau^{2q-1}\, d\tau \\
&>\frac{1}{2q} f(t_{1}) t_{1}- F(t_{2}) -  \frac{f(t_{1})}{t_{1}^{2q-1}} \int_{t_{2}}^{t_{1}} \tau^{2q-1}\, d\tau \\
&=\frac{1}{2q} f(t_{1}) t_{1}- F(t_{2}) -  \frac{f(t_{1})}{t_{1}^{2q-1}} \frac{t_{1}^{2q}-t_{2}^{2q}}{2q} \\
&=\frac{1}{2q} \frac{f(t_{1})}{t_{1}^{2q-1}} t_{2}^{2q}- F(t_{2}) \\
&>\frac{1}{2q} f(t_{2})t_{2}-F(t_{2}). 
\end{align*}
Similarly, it is possible to prove that $t\mapsto \frac{1}{2q} f(t)t-F(t)$ is decreasing for $t\leq 0$. 
\end{remark}

\begin{remark}%\label{rem2}
Take $u\in \E$ with $u^{\pm}\neq 0$ and $\xi, \la \geq 0$, then
\begin{align*}
|\nabla (\xi u^{+}+ \la u^{-})|^{p}= |\nabla (\xi u^{+})|^{p}+ |\nabla (\la u^{-})|^{p}
\end{align*}
and using the linearity of $F$ and the positivity of $K$ we also have 
\begin{align*}
\int_{\R^{3}} K(x) F(\xi u^{+}+ \la u^{-})\, dx = \int_{\R^{3}} K(x) \left(\, F(\xi u^{+}) + F(\la u^{-})\, \right)\, dx. 
\end{align*}
Hence, for any $u\in \E$ with $u^{\pm}\neq 0$ and $\xi, \la \geq 0$ we have
\begin{align}\label{somma}
\I(\xi u^{+}+ \la u^{-})= \I(\xi u^{+})+ \I(\la u^{-}).
\end{align}
Moreover, 
\begin{align*}
\langle \I'(\xi u^{+}+ \la u^{-}), \xi u^{+}\rangle &= \xi^{p} \int_{\R^{3}} \left(a|\nabla u^{+}|^{p}+ V(x)|u|^{p}\right)\, dx + b\xi^{2p} \left(\int_{\R^{3}}|\nabla u^{+}|^{p}\, dx\right)^{2}\\
&\quad+\xi^{q} \int_{\R^{3}} \left(c|\nabla u^{+}|^{q}+ V(x)|u|^{q}\right)\, dx + d\xi^{2q} \left(\int_{\R^{3}}|\nabla u^{+}|^{q}\, dx\right)^{2}\\
&\quad-\int_{\R^{3}} K(x) f(\xi u^{+}) \,\xi u^{+}\, dx  
\end{align*}
and 
\begin{align*}
\langle \I'(\xi u^{+}+ \la u^{-}), \xi u^{+}\rangle &= \la^{p} \int_{\R^{3}} \left(a|\nabla u^{-}|^{p}+ V(x)|u|^{p}\right)\, dx + b\la^{2p} \left(\int_{\R^{3}}|\nabla u^{-}|^{p}\, dx\right)^{2}\\
&\quad+\la^{q} \int_{\R^{3}} \left(c|\nabla u^{-}|^{q}+ V(x)|u|^{q}\right)\, dx + d\la^{2q} \left(\int_{\R^{3}}|\nabla u^{-}|^{q}\, dx\right)^{2}\\
&\quad-\int_{\R^{3}} K(x) f(\la u^{-}) \,\la u^{-}\, dx.  
\end{align*}
\end{remark}

\section{Preliminaries} \label{sect3}

\noindent
The Nehari manifold associated with $\I$ is given by 
\begin{equation*}
\mathcal{N}= \left\{ u\in \E\setminus \{0\} \, : \, \langle \I'(u), u \rangle=0 \right\}.  
\end{equation*}
We denote by 
\begin{equation*}
\mathcal{M}= \left\{ w \in \mathcal{N}\, : \, w^{\pm}\neq 0, \, \langle \I'(w), w^{+}\rangle = \langle \I'(w), w^{-}\rangle=0\right\},  
\end{equation*}
and by $\mathbb{S}$ the unit sphere on $\E$. We note that $\mathcal{M}\subset \mathcal{N}$.

Once $f$ is only continuous, the following results are crucial, since they allow us to overcome the non-differentiability of $\mathcal{N}$. 

\begin{lem}\label{lemz1}
Suppose that $(V, K)\in \mathcal{K}$ and $f$ satisfies conditions $(f_1)-(f_4)$. Then the following properties hold:
\begin{compactenum}
\item[$(a)$] For each $u\in \E\setminus\{0\}$, let $\varphi_{u}: \R_{+} \rightarrow \R$ be defined by 
$$
\varphi_{u}(t)= \I(tu).
$$ 
Then there is a unique $t_{u}>0$ such that
\begin{align*}
\varphi_{u}'(t)>0 \mbox{ for } t\in (0, t_{u}) \quad \mbox{ and } \quad \varphi_{u}'(t)<0 \mbox{ for } t\in (t_{u}, \infty);  
\end{align*}
\item[$(b)$] There is $\tau>0$, independent of $u$, such that $t_{u}\geq \tau$ for every $u\in \mathbb{S}$. Moreover, for each compact set $\mathbb{K}\subset \mathbb{S}$, there is $C_{\mathbb{K}}>0$ such that $t_{u}\leq C_{\mathbb{K}}$ for every $u\in \mathbb{K}$;
\item[$(c)$] The map $\hat{m}: \E\setminus \{0\}\rightarrow \mathcal{N}$ given by $\hat{m}(u):=t_{u}u$ is continuous and {$m:= \hat{m}|_{\mathbb{S}}$} is a homeomorphism between $\mathbb{S}$ and $\mathcal{N}$. Moreover, $m^{-1}(u)= \frac{u}{\|u\|}$.
\end{compactenum}
\end{lem}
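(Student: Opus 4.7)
The plan is to analyse the fibering map \(\varphi_u(t):=\I(tu)\) to establish (a), then use uniform estimates to get (b), and finally assemble (c) from the continuity of the fibering assignment. For \textbf{(a)}, differentiating gives that \(\varphi_u'(t)\) is a sum of four positive monomials in \(t\) of degrees \(p-1\), \(2p-1\), \(q-1\), \(2q-1\), minus \(\int_{\R^3} K(x) f(tu) u\, dx\). The \(\e\)-trick from \((f_1)\) (or \((\tilde f_1)\)) together with \((f_2)\) and Lemma \ref{lem1} yields, for any \(\e>0\),
\begin{equation*}
\int_{\R^3} K(x) |f(tu) u| \, dx \le \e \, t^{2p-1} \|u\|^{2p} + C_\e \, t^{q^{*}-1} \|u\|^{q^{*}},
\end{equation*}
so the \(t^{p-1}\)-monomial dominates for small \(t\), giving \(\varphi_u'(t)>0\); for large \(t\), combining \((f_3)\) with the consequence \(f(t)t\ge 2qF(t)\) of \((f_4)\) forces \(\int K(x) f(tu)tu\,dx\) to grow strictly faster than \(t^{2q}\), so \(\varphi_u'(t)<0\). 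For uniqueness I show that \(h(t):=\varphi_u'(t)/t^{2q-1}\) is strictly decreasing: the four explicit monomials contribute strictly decreasing functions of \(t\), while the remaining contribution \(-\int_{\{u\ne 0\}} K(x)|u|^{2q}\,\frac{f(tu)(tu)}{|tu|^{2q}}\,dx\) is strictly decreasing since, by a sign-analysis of \((f_4)\) separately for positive and negative arguments, \(s\mapsto f(s)s/|s|^{2q}\) is strictly increasing in \(|s|\) on \(\R\setminus\{0\}\). The unique zero \(t_u>0\) of \(h\) then has the claimed sign pattern.

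\textbf{Part (b).} The lower bound follows by contradiction: if \(t_n:=t_{u_n}\to 0\) for \(u_n\in\mathbb{S}\), I use \(\langle\I'(t_n u_n), t_n u_n\rangle=0\), the growth estimate above, and \(\|u_n\|=1\). Splitting cases according to whether the \(p\)-part or the \(q\)-part of \(\|u_n\|\) is at least \(1/2\), upon dividing by \(t_n^p\) or \(t_n^q\) and using the exponent inequalities \(2p>q\) and \(q^{*}>q\), one finds a positive constant bounded by a quantity tending to zero --- contradiction. For the upper bound on a compact \(\mathbb{K}\subset\mathbb{S}\), again by contradiction: a subsequence \(u_n\to u_0\) in \(\mathbb{K}\), hence \(u_0\not\equiv 0\); dividing \(\langle\I'(t_n u_n), t_n u_n\rangle=0\) by \(t_n^{2q}\) keeps the left-hand side bounded, while by \((f_3)\) the integrand \(K(x)|u_n|^{2q}\,F(t_n u_n)/|t_n u_n|^{2q}\) blows up pointwise on \(\{u_0\ne 0\}\). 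A Fatou-type estimate --- with a truncation at \(|t_n u_n|\le R\) controlling the region where \(u_n(x)\) is near zero --- forces the right-hand side to infinity, contradicting boundedness.

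\textbf{Part (c) and main obstacle.} For continuity of \(\hat m\), given \(u_n\to u\) in \(\E\setminus\{0\}\), set \(v_n := u_n/\|u_n\|\) and \(v := u/\|u\|\); the set \(\{v_n\}\cup\{v\}\subset\mathbb{S}\) is compact, so (b) yields \(t_{v_n}\) uniformly bounded between two positive constants. Along a subsequence \(t_{v_n}\to t^*>0\); passing to the limit in \(\langle\I'(t_{v_n}v_n), t_{v_n} v_n\rangle=0\) and invoking the uniqueness in (a) gives \(t^*=t_v\), so the full sequence converges. The scaling identity \(t_{\lambda u}=t_u/\lambda\) for \(\lambda>0\) (from uniqueness) then yields \(t_{u_n}\to t_u\), proving continuity of \(\hat m\). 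The inverse formula \(m^{-1}(v)=v/\|v\|\) follows from the fact that \(\|v\|\) is the unique positive scalar putting \(v/\|v\|\) on \(\mathcal{N}\); since this inverse is trivially continuous, \(m\) is a homeomorphism. The main obstacle I anticipate is the strict monotonicity in (a), where the sign analysis of \((f_4)\) for negative arguments requires care, together with the Fatou step in the upper bound of (b), which must combine the pointwise blow-up of \(F(t_n u_n)/|t_n u_n|^{2q}\) on \(\{u_0\ne 0\}\) with a truncation controlling the region where \(u_n(x)\) is near zero.
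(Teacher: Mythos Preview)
Your proposal is correct and follows essentially the same route as the paper: growth bounds from $(f_1)$ (or $(\tilde f_1)$) and $(f_2)$ for small $t$, $(f_3)$ plus Fatou for large $t$, and uniqueness via the strict monotonicity of $t\mapsto \varphi_u'(t)/t^{2q-1}$ coming from $(f_4)$ --- the paper phrases the latter as a subtract-and-contradict argument between two putative critical points, but it is the same computation. The only cosmetic difference is in the upper bound of (b), where the paper derives the contradiction from $\I(t_{u_n}u_n)\to-\infty$ versus $\I\ge 0$ on $\mathcal N$, while you divide the Nehari identity by $t_n^{2q}$ and apply Fatou directly; part (c) is handled identically.
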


\begin{proof}

\noindent
$(a)$ Let us assume that $(VK_{3})$ holds. Then, using assumptions $(f_{1})$-$(f_{2})$ given $\e>0$ there exists $C_{\e}>0$ such that
\begin{align*}
|f(t)|\leq \e |t|^{p-1} + C_{\e}|t|^{q^{*}-1}.  
\end{align*}
Therefore 
\begin{align*}
\I(tu)&\geq \frac{a}{p}t^{p}\int_{\R^{3}}|\nabla u|^{p}\, dx + \frac{c}{q}t^{q}\int_{\R^{3}}|\nabla u|^{q}\, dx +\int_{\R^{3}} V(x)\left( \frac{t^{p}}{p}|u|^{p}+ \frac{t^{q}}{q}|u|^{q}\right)\, dx \\
&\qquad- \e t^{p} \int_{\R^{3}} K(x) |u|^{p}\, dx - C_{\e} t^{q^{*}} \int_{\R^{3}} K(x) |u|^{q^{*}}\, dx \\
&\geq \frac{a}{p}t^{p}\int_{\R^{3}}|\nabla u|^{p}\, dx + \frac{c}{q}t^{q}\int_{\R^{3}}|\nabla u|^{q}\, dx +\int_{\R^{3}} V(x)\left( \frac{t^{p}}{p}|u|^{p}+ \frac{t^{q}}{q}|u|^{q}\right)\, dx \\
&\qquad-\e \left|\frac{K}{V}\right|_{\infty} t^{p} \int_{\R^{3}} V(x) |u|^{p}\, dx - C'_{\e} |K|_{\infty} t^{q^{*}} \|u\|^{q^{*}}\\
&\geq \frac{a}{p}t^{p}\int_{\R^{3}}|\nabla u|^{p}\, dx + \left(\frac{1}{p}- \e \left|\frac{K}{V}\right|_{\infty}\right) t^{p} \int_{\R^{3}} V(x)|u|^{p}\, dx + \frac{t^{q}}{q}\int_{\R^{3}} \left(c|\nabla u|^{q} + V(x)|u|^{q}\right)\, dx\\
&\qquad- C'_{\e} |K|_{\infty} t^{q^{*}} \|u\|^{q^{*}}.
\end{align*}
Choosing $\e \in \left(0, (2p\left|\frac{K}{V}\right|_{\infty})^{-1}\right)$, we get $t_{0}>0$ sufficiently small such that
\begin{align*}
0<\varphi_{u}(t)= \I(tu) \quad \ \  \mbox{for all} \ \  t\in (0, t_{0}).
\end{align*}
Now, we assume that $(VK_{4})$ is true. Then, there exists a positive constant $C_{m}$ such that, for each $\e \in (0, C_{m})$ we get $R>0$ such that for any $u\in \E$
\begin{align*}
\int_{\B_{R}^{c}(0)} K(x) |u|^{m}\, dx \leq \e \int_{\B_{R}^{c}(0)}(V(x) |u|^{p} + |u|^{q^{*}})\, dx. 
\end{align*}
Using assumptions $(\tilde{f}_{1})$ and $(f_{2})$, the H\"older and Sobolev inequality we get
\begin{align*}
\I(tu)&\geq \frac{a}{p}t^{p}\int_{\R^{3}}|\nabla u|^{p}\, dx + \frac{c}{q}t^{q}\int_{\R^{3}}|\nabla u|^{q}\, dx +\int_{\R^{3}} V(x)\left( \frac{t^{p}}{p}|u|^{p}+ \frac{t^{q}}{q}|u|^{q}\right)\, dx \\
&\qquad- C_{1} t^{m} \left(\int_{\B_{R}(0)} K(x) |u|^{m}\, dx+\int_{\B_{R}^{c}(0)} K(x) |u|^{m}\, dx\right) - C_{2} t^{q^{*}} \int_{\R^{3}} K(x) |u|^{q^{*}}\, dx \\
&\geq \frac{a}{p}t^{p}\int_{\R^{3}}|\nabla u|^{p}\, dx + \frac{c}{q}t^{q}\int_{\R^{3}}|\nabla u|^{q}\, dx +\int_{\R^{3}} V(x)\left( \frac{t^{p}}{p}|u|^{p}+ \frac{t^{q}}{q}|u|^{q}\right)\, dx \\
&\qquad - C_{1} t^{m} |K|_{\frac{q^{*}}{q^{*}-m}} |u|_{q^{*}}^{m} -C_{1} t^{m} \e \int_{\B_{R}^{c}(0)}(V(x) |u|^{p} + |u|^{q^{*}})\, dx - C_{2}'t^{q^{*}} |K|_{\infty} \|u\|^{q^{*}}\\
&\geq \frac{a}{p}t^{p}\int_{\R^{3}}|\nabla u|^{p}\, dx + \frac{c}{q}t^{q}\int_{\R^{3}}|\nabla u|^{q}\, dx +\int_{\R^{3}} V(x)\left( \frac{t^{p}}{p}|u|^{p}+ \frac{t^{q}}{q}|u|^{q}\right)\, dx \\
&\qquad - C_{2}'t^{q^{*}} |K|_{\infty} \|u\|^{q^{*}} - \tilde{C} t^{m} \left[ |K|_{\frac{q^{*}}{q^{*}-m}} |u|_{q^{*}}^{m} + \e \|u\|^{p} + \e \|u\|^{q^{*}}\right]. 
\end{align*}

Therefore there exists $t_{0}>0$ sufficiently small such that
\begin{align*}
0<\varphi_{u}(t)= \I(tu) \quad  \ \  \mbox{for all} \ \ t\in (0, t_{0}).
\end{align*}
Let $A\subset \supp u$ be a measurable set with finite and positive measure. From $F(t)\geq 0$ for any $t\in \R$, $1<p<q$, and combining assumptions $(f_{3})$ together with Fatou's lemma, we obtain 
\begin{align*}
&\limsup_{t\ri \infty} \frac{\I(tu)}{\|tu\|^{2q}}\\
&\leq \limsup_{t\ri \infty} \left\{\frac{1}{p} \frac{1}{t^{2q-p}\|u\|^{2q-p}} + \frac{b}{2p} \frac{1}{t^{2(q-p)}\|u\|^{2(q-p)}} +\frac{1}{q} \frac{1}{t^{q}\|u\|^{q}}+ \frac{d}{2q}- \int_{A} K(x) \frac{F(tu)}{(tu)^{2q}} \, \left(\frac{u}{\|u\|}\right)^{2q} \, dx	\right\}\\
&\leq \frac{d}{2q} - \liminf_{t\ri \infty} \int_{A} K(x) \frac{F(tu)}{(tu)^{2q}} \, \left(\frac{u}{\|u\|}\right)^{2q} \, dx\leq -\infty.
\end{align*}
Hence there exists $\bar{t}>0$ large enough for which $\varphi_{u}(\bar{t}\,)<0$. By virtue of the continuity of $\varphi_{u}$ and using $(f_{4})$, there exists $t_{u}>0$ which is a global maximum of $\varphi_{u}$ with $t_{u}u \in \mathcal{N}$. 

Next, we aim to prove that such $t_{u}$ is the unique critical point of $\varphi_{u}$. Assume by contradiction that there exist $0<t_{1}<t_{2}$ which are critical points of $\varphi_{u}$. Then, from the definition of $\varphi_{u}$ we get
\begin{align*}
&\frac{1}{t_{1}^{2q-p}}\int_{\R^{3}}\left(a|\nabla u|^{p}+V(x)|u|^{p}\right)\, dx +\frac{b}{t_{1}^{2(q-p)}}\left(\int_{\R^{3}} |\nabla u|^{p}\, dx\right)^{2}+\frac{1}{t_{1}^{q}}\int_{\R^{3}}\left(c|\nabla u|^{q}+V(x)|u|^{q}\right)\, dx \\
&\quad+d\left(\int_{\R^{3}} |\nabla u|^{q}\, dx\right)^{2} -\int_{\R^{3}} K(x)\frac{f(t_{1}u)}{(t_{1}u)^{2q-1}} u^{2q}\, dx=0
\end{align*}
and 
\begin{align*}
&\frac{1}{t_{2}^{2q-p}}\int_{\R^{3}}\left(a|\nabla u|^{p}+V(x)|u|^{p}\right)\, dx +\frac{b}{t_{2}^{2(q-p)}}\left(\int_{\R^{3}} |\nabla u|^{p}\, dx\right)^{2}+\frac{1}{t_{2}^{q}}\int_{\R^{3}}\left(c|\nabla u|^{q}+V(x)|u|^{q}\right)\, dx \\
&\quad+d\left(\int_{\R^{3}} |\nabla u|^{q}\, dx\right)^{2} -\int_{\R^{3}} K(x)\frac{f(t_{2}u)}{(t_{2}u)^{2q-1}} u^{2q}\, dx=0.
\end{align*}
These equalities together with assumption $(f_{4})$ imply that
\begin{align*}
0>&\left(\frac{1}{t_{2}^{2q-p}}- \frac{1}{t_{1}^{2q-p}} \right) \int_{\R^{3}} \left(a|\nabla u|^{p}+V(x)|u|^{p}\right)\, dx + b \left(\frac{1}{t_{2}^{2(q-p)}}-\frac{1}{t_{1}^{2(q-p)}} \right)\left(\int_{\R^{3}}|\nabla u|^{p}\, dx \right)^{2} \\
&\quad + \left(\frac{1}{t_{2}^{q}}- \frac{1}{t_{1}^{q}}\right) \int_{\R^{3}} \left(c|\nabla u|^{q}+V(x)|u|^{q}\right)\, dx\\
&\qquad= \int_{\R^{3}} K(x) \left( \frac{f(t_{2}u)}{(t_{2}u)^{2q-1}}- \frac{f(t_{1}u)}{(t_{1}u)^{2q-1}}\right) u^{2q} \, dx\geq 0
\end{align*}
which leads to a contradiction. 
\begin{comment}
$0<t_{1}<t_{2}$ implies that $t_{1}^{2q-p}<t_{2}^{2q-p}$ implies that $\frac{1}{t_{2}^{2q-p}}< \frac{1}{t_{1}^{2q-p}}$ and $\frac{1}{t_{2}^{q}}<\frac{1}{t_{1}^{q}}$ and $\frac{1}{t_{2}^{2(q-p)}}<\frac{1}{t_{1}^{2(q-p)}}$. 
\end{comment}

\noindent
$(b)$ By 
$(a)$ there exists $t_{u}>0$ such that $\varphi'_{u}(t_{u})=0$, or equivalently $\langle \I'(t_{u}u), t_{u}u\rangle =0$, and arguing as before, we find a positive $\tau$ independent of $u$ such that $t_{u}\geq \tau$. 

Now, let $\mathbb{K}\subset \mathbb{S}$ be a compact set and assume by contradiction that there exists $(u_{n})\subset \mathbb{K}$ such that $t_{u_{n}}\ri \infty$. Hence, there exists $u\in \mathbb{K}$ such that $u_{n}\ri u$ in $\E$. Proceeding as in $(a)$ we can prove that $\I(t_{u_{n}}u_{n})\ri -\infty$ in $\R$. Since $t_{u_{n}}u_{n} \in \mathcal{N}$, from Remark \ref{rem1} and recalling that $1<p<q$ we get
\begin{align*}
\I(t_{u_{n}}u_{n}) &= \I(t_{u_{n}}u_{n}) - \frac{1}{2q} \langle \I'(t_{u_{n}}u_{n}), t_{u_{n}}u_{n}\rangle \\
&= \left( \frac{1}{p}-\frac{1}{2q}\right) t_{u_{n}}^{p} \int_{\R^{3}} \left(a|\nabla u_{n}|^{p}+V(x)|u_{n}|^{p}\right)\, dx  + \frac{b}{2}\left(\frac{1}{p}-\frac{1}{q}\right) t_{u_{n}}^{2p}\left(\int_{\R^{3}} |\nabla (u_{n})|^{p}\, dx \right)^{2}\\
&\quad + \frac{1}{2q}t_{u_{n}}^{q} \int_{\R^{3}} \left(c|\nabla u_{n}|^{q}+V(x)|u_{n}|^{q}\right)\, dx + \int_{\R^{3}} K(x)\left[ \frac{1}{2q} f(t_{u_{n}}u_{n}) t_{u_{n}}u_{n} - F(t_{u_{n}}u_{n})\right]\, dx \geq 0, 
\end{align*}
which leads to a contradiction. 

\noindent
$(c)$ Note that $\hat{m}$, $m$ and $m^{-1}$ are well defined. In fact, from $(a)$ we deduce that for each $u\in \E\setminus \{0\}$ there exists a unique $\hat{m}(u)\in \mathcal{N}$. 

On the other hand, if $u\in\mathcal{N}$ then $u\neq 0$, and we deduce that $m^{-1}(u)= \frac{u}{\|u\|} \in \mathbb{S}$ and $m^{-1}$ is well defined. We point out that 
\begin{align*}
&m^{-1}(m(u))= m^{-1}(t_{u}u)=\frac{t_{u}u}{\|t_{u}u\|}=u \quad \mbox{ for any } u\in \mathbb{S}, \\
&m(m^{-1}(u))= m\left(\frac{u}{\|u\|}\right)= t_{\frac{u}{\|u\|}}\, \frac{u}{\|u\|}= u \quad \mbox{ for any } u\in \mathcal{N}, 
\end{align*}
so $m$ is bijective with its inverse $m^{-1}$ continuous. 

Now, let $(u_{n})\subset \E$ and $u\in \E \setminus \{0\}$ such that $u_{n}\ri u$ in $\E$. Using $(b)$ we can find $t_{0}>0$ such that $t_{u_{n}} \|u_{n}\|= t_{\frac{u}{\|u\|}}\ri t_{0}$. Therefore $t_{u_{n}}\ri \frac{t_{0}}{\|u\|}$. Using the fact that $t_{u_{n}}u_{n}\in \mathcal{N}$ and taking the limit as $n\ri \infty$ we deduce that $\frac{t_{0}}{\|u\|}u\in \mathcal{N}$ and $t_{u}= \frac{t_{0}}{\|u\|}$. This implies that $\hat{m}(u_{n})\ri \hat{m}(u)$, hence $\hat{m}$ and $m$ are continuous functions. 
\end{proof}

Let us define the maps
\begin{equation*}
\hat{\psi}: \E \rightarrow \R \quad \mbox{ and } \quad \psi: \mathbb{S}\rightarrow \R,
\end{equation*}
by $\hat{\psi}(u):= \I(\hat{m}(u))$ and $\psi:=\hat{\psi}|_{\mathbb{S}}$. 

The next result is a consequence of Lemma \ref{lem1} (see \cite{SW}).
\begin{prop}\label{propz2}
Suppose that $(V, K)\in \mathcal{K}$ and $f$ fulfills $(f_1)-(f_4)$. Then the following properties hold:
\begin{compactenum}
\item[$(a)$] $\hat{\psi}\in \C^{1}(\E\setminus\{0\}, \R)$ and
\begin{equation*}
\langle \hat{\psi}'(u), v \rangle=\frac{\|\hat{m}(u)\|}{\|u\|} \langle \I'(\hat{m}(u)), v \rangle \, \mbox{ for all } u\in \E\setminus \{0\} \mbox{ and } v\in \E; 
\end{equation*}
\item[$(b)$] $\psi \in \C^{1}(\mathbb{S}, \R)$ and $\langle \psi'(u), v \rangle = \|m(u)\| \langle \I'(m(u)), v \rangle$, for every $v\in T_{u}\mathbb{S}$;
\item[$(c)$] If $(u_{n})$ is a ${(\rm PS)}_{d}$ sequence for $\psi$, then $\{m(u_{n})\}_{n\in \mathbb{N}}$ is a ${(\rm PS)}_{d}$ sequence for $\I$. Moreover, if $(u_{n})\subset \mathcal{N}$ is a bounded ${(\rm PS)}_{d}$ sequence for $\I$, then $\{m^{-1} (u_{n})\}_{n\in \mathbb{N}}$ is a ${(\rm PS)}_{d}$ sequence for the functional $\psi$; 
\item[$(d)$] $u$ is a critical point of $\psi$ if and only if $m(u)$ is a nontrivial critical point for $\I$. Moreover, the corresponding critical values coincide and
\begin{equation*}
\inf_{u\in\mathbb{S}} \psi(u) = \inf_{u\in\mathcal{N}} \I(u).
\end{equation*}
\end{compactenum}
\end{prop}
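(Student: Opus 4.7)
The plan is to follow the abstract Nehari-manifold framework of Szulkin and Weth in \cite{SW}, adapted to our $(p,q)$-Kirchhoff functional $\I$. The structural prerequisites are already in place: Lemma \ref{lemz1}(a) provides, for each $u\in\E\setminus\{0\}$, a unique fiber maximizer $t_u>0$; Lemma \ref{lemz1}(b) yields the uniform lower bound $t_u\geq\tau>0$ on $\mathbb{S}$ and uniform upper bounds on compacta; Lemma \ref{lemz1}(c) gives continuity of $u\mapsto t_u$ and the homeomorphism properties of $\hat m$. Since $f$ is only continuous, the map $u\mapsto t_u$ is \emph{a priori} only continuous (an implicit-function-theorem argument would require $f\in\C^1$), so the smoothness of $\hat\psi$ must be obtained through a sandwich estimate rather than a direct chain rule.

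For part (a), fix $u\in\E\setminus\{0\}$ and $v\in\E$. For $|s|$ sufficiently small, the extremal characterization $\I(t_u u)=\max_{t>0}\I(tu)$ applied both at $u$ and at $u+sv$ yields the two-sided bound
$$
\I\bigl(t_{u+sv}(u+sv)\bigr) - \I\bigl(t_{u+sv}\,u\bigr) \;\leq\; \hat\psi(u+sv) - \hat\psi(u) \;\leq\; \I\bigl(t_u(u+sv)\bigr) - \I(t_u u).
$$
Dividing by $s$ and applying the mean value theorem to the outer differences (valid since $\I\in\C^1(\E,\R)$), then letting $s\to 0$ and using continuity of $u\mapsto t_u$ together with continuity of $\I'$, both bounds converge to the common limit $t_u\langle\I'(t_u u),v\rangle$. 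Hence $\hat\psi$ is G\^{a}teaux differentiable with $\langle\hat\psi'(u),v\rangle=t_u\langle\I'(\hat m(u)),v\rangle$; $1$-homogeneity of the norm gives $t_u=\|\hat m(u)\|/\|u\|$, which is the stated formula. Continuity of $\hat\psi'$ then follows from continuity of $\hat m$ and of $\I'$, so $\hat\psi\in\C^1(\E\setminus\{0\},\R)$.

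Parts (b)--(d) are mostly formal given (a). Restricting (a) to $\mathbb{S}$ and using $\|u\|=1\Rightarrow\|m(u)\|=t_u$ yields (b) for every $v\in T_u\mathbb{S}$. For (c), Lemma \ref{lemz1}(b) ensures that the scaling factor $t_{u_n}=\|m(u_n)\|$ appearing in the derivative formula remains bounded and bounded away from zero along any $(\rm PS)_d$-sequence for $\psi$ on $\mathbb{S}$, which allows the $(\rm PS)_d$-condition to be transferred in both directions between $\psi$ at $u_n$ and $\I$ at $m(u_n)\in\mathcal{N}$; the converse direction uses that any bounded $(\rm PS)_d$-sequence in $\mathcal{N}$ stays uniformly bounded below in norm (a consequence of Lemma \ref{lemz1}(a)--(b)). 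For (d), if $\psi'(u)=0$ on $T_u\mathbb{S}$, then by the formula in (b) and a Lagrange-multiplier argument on $\mathbb{S}$ together with the Nehari identity $\langle\I'(m(u)),m(u)\rangle=0$, the multiplier must vanish, so $m(u)$ is a free critical point of $\I$; the converse is immediate. The identity $\inf_{\mathbb{S}}\psi=\inf_{\mathcal{N}}\I$ follows at once from $\psi=\I\circ m$ and the bijection in Lemma \ref{lemz1}(c).

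The main obstacle is precisely the derivative formula in (a) under the sole hypothesis $f\in\C(\R,\R)$. Here the sandwich estimate is essential: it sidesteps the implicit function theorem and relies only on the extremal property of $t_u$ together with the continuity already established in Lemma \ref{lemz1}, which is the mechanism that makes the Szulkin--Weth scheme work in our low-regularity setting.
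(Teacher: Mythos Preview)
Your argument follows exactly the Szulkin--Weth scheme that the paper itself invokes (the paper gives no independent proof but simply cites \cite{SW}, after noting that Lemma~\ref{lemz1} supplies the required structural hypotheses). One small slip: in your sandwich estimate for part~(a) the two bounds are interchanged---the maximality of $t_{u+sv}$ and $t_u$ actually gives
\[
\I\bigl(t_u(u+sv)\bigr)-\I(t_u u)\;\le\;\hat\psi(u+sv)-\hat\psi(u)\;\le\;\I\bigl(t_{u+sv}(u+sv)\bigr)-\I\bigl(t_{u+sv}u\bigr),
\]
but since both outer terms converge to $t_u\langle\I'(t_u u),v\rangle$ as $s\to 0$ this does not affect the conclusion.
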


We notice that the following equalities hold:
\begin{align}\begin{split}\label{z8}
d_{\infty}:= \inf_{u\in \mathcal{N}} \I(u) = \inf_{u\in \E\setminus \{0\}} \max_{t>0} \I(tu) = \inf_{u\in \mathbb{S}} \max_{t>0} \I(tu).
\end{split}\end{align}
In particular, from $(a)$ of Lemma \ref{lem1} and (\ref{z8}) it follows that
\begin{equation}\label{z9}
d_{\infty}>0.
\end{equation}

\section{Technical Lemmas}\label{sect4}

\noindent
For each $u\in \E$ with $u^{\pm}\neq 0$, let us introduce the map $g_{u}: [0, \infty)\times [0, \infty) \ri \R$ defined by 
$$
g_{u}(\xi, \la)= \I(\xi u^{+}+\la u^{-}).
$$ 

\begin{lem}\label{lem3.2}
Suppose that $(V, K)\in \mathcal{K}$ and $f$ fulfills $(f_{1})$-$(f_{4})$. Then the following properties hold:
\begin{compactenum}[$(i)$]
\item The pair $(\xi, \la)$ is a critical point of $g_{u}$ with $\xi, \la>0$ if and only if $\xi u^{+}+ \la u^{-} \in \mathcal{M}$. 
\item The map $g_{u}$ has a unique critical point $(\xi_{+}, \la_{-})$, with $\xi_{+}= \xi_{+}(u)>0$ and $\la_{-}= \la_{-}(u)>0$ which is the unique global maximum point of $g_{u}$. 
\item The maps $a_{+}(r):=\frac{\partial g_{u}}{\partial \xi}(r, \la_{-})$ and $a_{-}(r):=\frac{\partial g_{u}}{\partial \la}(\xi_{+}, r)r$ are such that $a_{+}(r)>0$ if $r\in (0, \xi_{+})$, $a_{+}(r)<0$ if $r\in (\xi_{+}, \infty)$, $a_{-}(r)>0$ if $r\in (0, \la_{-})$ and $a_{-}(r)<0$ if $r\in (\la_{-}, \infty)$.
\end{compactenum}
\end{lem}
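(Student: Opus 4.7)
The argument rests on the fibering identities
\begin{align*}
\xi\,\partial_{\xi}g_{u}(\xi,\la)=\langle \I'(\xi u^{+}+\la u^{-}),\,\xi u^{+}\rangle,\qquad
\la\,\partial_{\la}g_{u}(\xi,\la)=\langle \I'(\xi u^{+}+\la u^{-}),\,\la u^{-}\rangle,
\end{align*}
obtained by direct differentiation using $|\nabla(\xi u^{+}+\la u^{-})|^{t}=\xi^{t}|\nabla u^{+}|^{t}+\la^{t}|\nabla u^{-}|^{t}$ a.e., the analogous splitting $F(\xi u^{+}+\la u^{-})=F(\xi u^{+})+F(\la u^{-})$, and the pointwise identity $|\xi u^{+}+\la u^{-}|^{t-2}(\xi u^{+}+\la u^{-})u^{\pm}=\xi^{t-1}|u^{+}|^{t}$ or $\la^{t-1}|u^{-}|^{t}$ depending on sign. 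Part $(i)$ is immediate: setting $w=\xi u^{+}+\la u^{-}$ with $\xi,\la>0$ (so that $w^{\pm}=\xi u^{+}$ or $\la u^{-}$), criticality of $(\xi,\la)$ means that $\langle\I'(w),w^{\pm}\rangle$ both vanish, hence also $\langle\I'(w),w\rangle=0$ and $w\in\mathcal M$; the converse reads identically backwards.

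The key analytical input for $(ii)$ and $(iii)$ is the following consequence of $(f_{4})$: for every $\la\ge 0$ the function
\begin{align*}
\Phi_{\la}(\xi):=\frac{\partial_{\xi}g_{u}(\xi,\la)}{\xi^{2q-1}},\qquad \xi>0,
\end{align*}
is strictly decreasing. Indeed, after the division every polynomial-in-$\xi$ contribution (including the Kirchhoff cross-terms $b\xi^{p-2q}\la^{p}Q_{+}^{p}Q_{-}^{p}$ and $d\xi^{q-2q}\la^{q}Q_{+}^{q}Q_{-}^{q}$, where $Q_{\pm}^{t}:=\int_{\R^{3}}|\nabla u^{\pm}|^{t}\,dx$) carries a non-positive exponent in $\xi$ and is therefore nonincreasing, while the nonlinear term rewrites as $-\int_{\R^{3}} K(x)\,\tfrac{f(\xi u^{+})}{(\xi u^{+})^{2q-1}}(u^{+})^{2q}\,dx$ and is strictly decreasing by $(f_{4})$. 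Combining strict decrease with positivity of $\Phi_{\la}$ near $0$ (from $(f_{1})$ or $(\tilde f_{1})$, as in the proof of Lemma~\ref{lemz1}$(a)$) and negativity as $\xi\to\infty$ (from $(f_{3})$), one obtains a unique $T_{1}(\la)>0$ with $\Phi_{\la}(T_{1}(\la))=0$, and symmetrically a unique $T_{2}(\xi)>0$. Once $(\xi_{+},\la_{-})$ is produced, $(iii)$ is then immediate: $a_{+}(r)=r^{2q-1}\Phi_{\la_{-}}(r)$ vanishes only at $r=\xi_{+}$ and inherits the sign of $\Phi_{\la_{-}}$, so $a_{+}(r)>0$ on $(0,\xi_{+})$ and $a_{+}(r)<0$ on $(\xi_{+},\infty)$; the argument for $a_{-}$ is identical (the extra factor $r>0$ being irrelevant to the sign).

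To produce $(\xi_{+},\la_{-})$ without invoking Miranda's theorem, as announced in the introduction, I will iterate: choose $\xi_{0}>0$, set $\la_{n+1}=T_{2}(\xi_{n})$ and $\xi_{n+1}=T_{1}(\la_{n+1})$, derive uniform upper and lower bounds on $(\xi_{n},\la_{n})$ from the Nehari-type estimates of Lemma~\ref{lemz1}$(b)$ applied to $u^{+}/\|u^{+}\|$ and $u^{-}/\|u^{-}\|$ together with the $(f_{3})$-coercivity, and pass to a limit via the continuity of $T_{1},T_{2}$ on compacts of $(0,\infty)$; the limit $(\xi_{+},\la_{-})$ satisfies both fixed-point relations, so that both partial derivatives of $g_{u}$ vanish there. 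Uniqueness follows from the strict monotonicity of $\Phi$: two candidates $(\xi_{1},\la_{1}),(\xi_{2},\la_{2})$ with, say, $\xi_{1}\le\xi_{2}$ force, via $\Phi_{\la_{i}}(\xi_{i})=0$ and its $\la$-analogue, a chain of inequalities contradicting the strict decrease, exactly as in the second part of Lemma~\ref{lemz1}$(a)$. The global-maximum claim follows because $g_{u}\to-\infty$ radially by $(f_{3})$ and the supremum cannot lie on a coordinate axis: e.g.\ if it were $(\xi_{0},0)$, then for small $\varepsilon>0$ a direct computation using the disjoint-support identities gives
\begin{align*}
g_{u}(\xi_{0},\varepsilon)-g_{u}(\xi_{0},0)=\I(\varepsilon u^{-})+\tfrac{b}{p}\varepsilon^{p}\xi_{0}^{p}Q_{+}^{p}Q_{-}^{p}+\tfrac{d}{q}\varepsilon^{q}\xi_{0}^{q}Q_{+}^{q}Q_{-}^{q}>0
\end{align*}
by the small-$\varepsilon$ positivity of $\I(\varepsilon u^{-})$ obtained in Lemma~\ref{lemz1}$(a)$, contradicting maximality; the symmetric case is analogous. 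Thus the maximum lies in the open first quadrant, and being a critical point it must coincide with $(\xi_{+},\la_{-})$. I expect the principal technical obstacle to be arranging both the iterative convergence and the coupled uniqueness argument in the presence of the Kirchhoff cross-terms $b\xi^{p}\la^{p}Q_{+}^{p}Q_{-}^{p}$ and $d\xi^{q}\la^{q}Q_{+}^{q}Q_{-}^{q}$, which prevent a clean decoupling of the $\xi$- and $\la$-equations.
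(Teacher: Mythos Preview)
Your fibering identities and part~$(i)$ match the paper verbatim. For part~$(ii)$ the paper also introduces the slice-maximizer maps (there called $\eta_{1},\eta_{2}$, your $T_{1},T_{2}$) and reduces existence to a fixed point of $(\xi,\la)\mapsto(\eta_{1}(\la),\eta_{2}(\xi))$; the difference is that the paper closes by applying Brouwer's fixed-point theorem on a square $[0,A]\times[0,A]$, which sidesteps the convergence issue in your iteration entirely. Your scheme is in fact salvageable---the cross terms make $\Phi_{\la}(\xi)$ increasing in $\la$, hence $T_{1}$ is nondecreasing, so $T_{1}\circ T_{2}$ is monotone and the iterates converge---but you did not record this and Brouwer is cleaner. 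For uniqueness the paper does not chase inequalities between two generic critical points as you propose; instead it first shows that for $w\in\mathcal{M}$ the only positive critical point of $g_{w}$ is $(1,1)$ (same subtraction trick as in Lemma~\ref{lemz1}$(a)$), and then reduces the general case by rescaling: if $(\xi_{1},\la_{1})$ and $(\xi_{2},\la_{2})$ are critical for $g_{u}$, set $U_{1}=\xi_{1}u^{+}+\la_{1}u^{-}\in\mathcal{M}$ and observe that $(\xi_{2}/\xi_{1},\la_{2}/\la_{1})$ is critical for $g_{U_{1}}$, forcing both ratios to equal $1$. Your ``chain of inequalities'' is too vague in the coupled setting; the rescaling trick is the clean route.

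One substantive discrepancy: the Kirchhoff cross-terms $b\xi^{p}\la^{p}Q_{+}^{p}Q_{-}^{p}$ and $d\xi^{q}\la^{q}Q_{+}^{q}Q_{-}^{q}$ that you single out as the principal obstacle are \emph{absent} from the paper's computations. The paper relies on the decoupling identity~\eqref{somma}, $\I(\xi u^{+}+\la u^{-})=\I(\xi u^{+})+\I(\la u^{-})$, and accordingly its $\xi$-equation~\eqref{t2} carries no $\la$-dependence at all; under that identity $g_{u}$ separates variables and the lemma collapses to two independent applications of Lemma~\ref{lemz1}$(a)$. Your version, retaining the cross-terms, is the harder statement. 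Your monotonicity argument for $\Phi_{\la}$ survives because the cross contributions enter with nonpositive $\xi$-exponents after division by $\xi^{2q-1}$, so part~$(iii)$ and the slice-uniqueness are fine as written; it is only the joint uniqueness and the fixed-point step that need the adjustments above.
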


\begin{proof}

\noindent
$(i)$
Let us point out that the gradient of $g_{u}$ is given by 
\begin{align*}
\nabla g_{u}(\xi, \la)&= \left( \frac{\partial g_{u}}{\partial \xi} (\xi, \la), \frac{\partial g_{u}}{\partial \la} (\xi, \la) \right) \\
&= \left( \langle \I'(\xi u^{+}+ \la u^{-}), u^{+} \rangle, \langle \I'(\xi u^{+}+ \la u^{-}), u^{-} \rangle \right) \\
&= \left(\frac{1}{\xi} \langle \I'(\xi u^{+}+ \la u^{-}), \xi u^{+} \rangle, \frac{1}{\la}\langle \I'(\xi u^{+}+ \la u^{-}), \la u^{-} \rangle \right). 
\end{align*}
Now, the pair $(\xi, \la)$, with $\xi, \la>0$, is a critical point of $g_{u}$ if and only if 
\begin{align*}
\langle \I'(\xi u^{+}+ \la u^{-}), \xi u^{+} \rangle=0 \quad \mbox{ and } \quad \langle \I'(\xi u^{+}+ \la u^{-}), \la u^{-} \rangle=0
\end{align*}
that is $\xi u^{+}+ \la u^{-}\in \mathcal{M}$.

\noindent
$(ii)$
First we prove that $\mathcal{M}\neq \emptyset$. For each $u\in \E$ with $u^{\pm} \neq 0$ and $\la_{0}$ fixed, let us define the function $g_{1}(\xi): [0, \infty) \ri [0, \infty)$ by $g_{1}(\xi)= g_{u}(\xi, \la_{0})$. 

As in Lemma \ref{lem1}, the map $g_{1}$ has a maximum positive point and furthermore there exists $\xi_{0}= \xi_{0}(u, \la_{0})>0$ such that $g_{1}'(\xi)>0$ for $\xi\in (0, \xi_{0})$, $g_{1}'(\xi)<0$ for $\xi\in (\xi_{0}, \infty)$ and $g_{1}'(\xi_{0})=0$.
 
Hence, it is well defined the function $\eta_{1}:[0, \infty) \ri [0, \infty)$ defined by $\eta_{1}(\la):= \xi(u, \la)$, where $\xi(u, \la)$ satisfies the properties just mentioned with $\la$ in place of $\la_{0}$. Exploiting the definition of $g_{1}$, for all $\la\geq 0$ we get
\begin{equation}\label{t1}
g_{1}'(\eta_{1}(\la))= \frac{\partial g_{u}}{\partial \xi}(\eta_{1}(\la), \la)= \langle \I'(\eta_{1}(\la)u^{+}+ \la u^{-}), \eta_{1}(\la)u^{+}\rangle =0. 
\end{equation}
Note that, when $u^{\pm}\neq 0$ and the support of $u^{+}$ and $u^{-}$ are disjoint in $\R^{3}$, it follows that \eqref{t1} is equivalent to
\begin{align}\begin{split}\label{t2}
&\eta_{1}(\la)^{p}\int_{\R^{3}} \left(a |\nabla u^{+}|^{p}+V(x)|u^{+}|^{p}\right)\, dx+ b\eta_{1}(\la)^{2p} \left(\int_{\R^{3}}|\nabla u^{+}|^{p}\, dx \right)^{2} \\
&\quad + \eta_{1}(\la)^{q} \int_{\R^{3}} \left(c |\nabla u^{+}|^{q}+V(x)|u^{+}|^{q}\right)\, dx + d\eta_{1}(\la)^{2q} \left(\int_{\R^{3}}|\nabla u^{+}|^{q}\, dx \right)^{2} \\
&\qquad= \int_{\R^{3}} K(x) f(\eta_{1}(\la)u^{+})\, \eta_{1}(\la)u^{+}\, dx. 
\end{split}\end{align}
First we note that $\eta_{1}$ is a continuous map. Indeed, let $(\la_{n})$ be a sequence such that $\la_{n}\ri \la_{0}$ as $n\ri \infty$ in $\R$, and assume that $\eta_{1}(\la_{n})\ri \infty$ as $n\ri \infty$. We aim to prove that $(\eta_{1}(\la_{n}))$ is bounded. By contradiction, let us suppose that there us a subsequence, still denoted by $(\la_{n})$, such that $\eta_{1}(\la_{n})\ri \infty$ as $n\ri \infty$. In particular, for $n$ sufficiently large we have that $\eta_{1}(\la_{n})\geq \la_{n}$. From \eqref{t2} we get
\begin{align*}
&\frac{1}{\eta_{1}(\la_{n})^{2q-p}}\int_{\R^{3}} \left(a |\nabla u^{+}|^{p}+V(x)|u^{+}|^{p}\right)\, dx+ \frac{b}{\eta_{1}(\la_{n})^{2(q-p)}} \left(\int_{\R^{3}}|\nabla u^{+}|^{p}\, dx \right)^{2} \\
&\quad+\frac{1}{\eta_{1}(\la_{n})^{q}}  \int_{\R^{3}} \left(c |\nabla u^{+}|^{q}+V(x)|u^{+}|^{q}\right)\, dx+ d \left(\int_{\R^{3}}|\nabla u^{+}|^{q}\, dx \right)^{2} \\
&\qquad= \int_{\R^{3}} K(x) \frac{f(\eta_{1}(\la_{n}) u^{+})}{(\eta_{1}(\la_{n})u^{+})^{2q-1}} (u^{+})^{2q}\, dx, 
\end{align*}
recalling that $\eta_{1}(\la_{n})\ri \infty$ as $n\ri \infty$, $\la_{n}\ri \la_{0}$ as $n\ri \infty$ and exploiting $(f_{3})$, $(f_{4})$ and Fatou's lemma, we get a contradiction. This shows that $(\eta_{1}(\la_{n}))$ is bounded. So there exists $\xi_{0}\geq 0$ such that $\eta_{1}(\la_{n})\ri \xi_{0}$ as $n\ri \infty$. Now, using \eqref{t2} with $\la= \la_{n}$ and taking $n\ri \infty$ we deduce
\begin{align*}
&\xi_{0}^{p}\int_{\R^{3}} \left(a |\nabla u^{+}|^{p}+V(x)|u^{+}|^{p}\right)\, dx+ b\xi_{0}^{2p} \left(\int_{\R^{3}}|\nabla u^{+}|^{p}\, dx \right)^{2} \\
&\quad + \xi_{0}^{q} \int_{\R^{3}} \left(c |\nabla u^{+}|^{q}+V(x)|u^{+}|^{q}\right)\, dx + d\xi_{0}^{2q} \left(\int_{\R^{3}}|\nabla u^{+}|^{q}\, dx \right)^{2} \\
&\qquad= \int_{\R^{3}} K(x) f(\xi_{0}u^{+})\, \xi_{0}u^{+}\, dx
\end{align*}
that is
$$
g_{1}'(\xi_{0})=\frac{\partial g_{u}}{\partial \xi}(\xi_{0}, \la_{0})=0. 
$$
Hence, $\xi_{0}=\eta_{1}(\la_{0})$ which implies that $\eta_{1}$ is a continuous map. 

Moreover, $\eta_{1}(0)>0$. Indeed, if we suppose by contradiction that there exists a sequence $(\la_{n})$ such that $\eta_{1}(\la_{n})\ri 0^{+}$ and $\la_{n}\ri 0$ as $n\ri \infty$, then gathering \eqref{t2} with $(f_{1})$ we get
\begin{align*}
d\left(\int_{\R^{3}} |\nabla u^{+}|^{q}\, dx \right)^{2} \leq \int_{\R^{3}} K(x) \frac{f(\eta_{1}(\la_{n})u^{+})}{(\eta_{1}(\la_{n})u^{+})^{2q-1}} (u^{+})^{2q}\, dx \ri 0, 
\end{align*}
which gives a contradiction. Finally, we can also see that $\eta_{1}(\la)\leq s$ for $s$ sufficiently  large. 

In a similar fashion, for each $\xi_{0}\geq 0$ we define $g_{2}(\la)= g_{u}(\xi_{0}, \la)$, and we can introduce a map $\eta_{2}$ that satisfies the same properties as $\eta_{1}$. In particular, there exists a positive constant $A_{1}$ such that for each $\xi, \la \geq A_{1}$ it holds that $\eta_{1}(\la)\leq \la$ and $\eta_{2}(\xi)\leq \xi$. 

Let $A_{2}:=\max\{\max_{\la\in [0, A_{1}]} \eta_{1}(\la), \max_{\xi\in [0, A_{1}]} \eta_{2}(\xi)\}$ and set $A:= \max\{C_{1}, C_{2}\}$. 

Next, we introduce the map $\Phi: [0, A]\times [0, A]\ri \R^{2}$ defined by $\Phi(\xi, \la)= (\eta_{1}(\la), \eta_{2}(\xi))$. First we note that $\Phi$ is a continuous map due to the continuity of $\eta_{1}$ and $\eta_{2}$, moreover for every $s\in [0, A]$ we can see that 
\begin{align*}
&\mbox{ if } \la\geq A_{1} \mbox{ then } \eta_{1}(\la) \leq \la \leq A \\ 
&\mbox{ if } \la\leq A_{1} \mbox{ then } \eta_{1}(\la)\leq \max_{\la\in [0, A_{1}]} \eta_{1}(\la)\leq A_{2}\leq A
\end{align*}
and similarly
\begin{align*}
&\mbox{ if } \xi\geq A_{1} \mbox{ then } \eta_{2}(\xi)\leq t\leq A, \\
&\mbox{ if } \xi\leq A_{1} \mbox{ then } \eta_{2}(\xi)\leq \max_{\xi\in [0, A_{1}]} \eta_{1}(\xi)\leq A_{2}\leq A, 
\end{align*}
hence $\Phi([0, A]\times [0, A])\subset [0, A]\times [0, A]$. 
Applying Brouwer's fixed point theorem there exists $(\xi_{+}, \la_{-})\in [0, A]\times [0, A]$ such that 
\begin{align*}
\Phi(\xi_{+}, \la_{-})= (\eta_{1}(s_{-}), \eta_{2}(t_{+}))= (\xi_{+}, \la_{-}). 
\end{align*}
Since $\eta_{1}, \eta_{2}$ are positive functions, $\xi_{+}, \la_{-}>0$. In addition $\nabla g_{u}(\xi_{+}, \la_{-})=0$, hence $(\xi_{+}, \la_{-})$ is a critical point of $g_{u}$. 

Next, we show the uniqueness of $(\xi_{+}, \la_{-})$. First, take $w\in \mathcal{M}$. By $w= w^{+}+w^{-}$ and the definition of $g_{w}$ it follows that $\nabla g_{w}(1, 1)=(0, 0)$, hence $(1, 1)$ is a critical point of $g_{w}$. Our aim is to show that $(1, 1)$ is the unique critical point of $g_{w}$ with positive coordinates. With this goal, let $(\xi_{0}, \la_{0})$ be a critical point go $g_{w}$ with $0<\xi_{0}\leq \la_{0}$. Using $\frac{\partial g_{w}}{\partial \xi}(\xi_{0}, \la_{0})=0$, which is equivalent to $\langle \I'(\xi_{0}w^{+}+ \la_{0}w^{-}), \xi_{0}w^{+}\rangle=0$, we can see that
\begin{align*}
&\frac{1}{\xi_{0}^{2q-p}} \int_{\R^{3}} \left( a|\nabla w^{+}|^{p}+ V(x)|w^{+}|^{p}\right) \, dx + \frac{b}{\xi_{0}^{2(q-p)}} \left(\int_{\R^{3}}|\nabla w^{+}|^{p}\, dx\right)^{2} \\
&\quad + \frac{1}{\xi_{0}^{q}} \int_{\R^{3}} \left(c|\nabla w^{+}|^{q} + V(x)|w^{+}|^{q}\right)\, dx + d\left(\int_{\R^{3}}|\nabla w^{+}|^{q}\, dx\right)^{2} \\
&\qquad =\int_{\R^{3}} K(x) \frac{f(\xi_{0} w^{+})}{(\xi_{0}w^{+})^{2q-1}} \, (w^{+})^{2q}\, dx.   
\end{align*}
Exploiting the fact that $w\in \mathcal{M}$ we have 
\begin{align*}
&\int_{\R^{3}} \left( a|\nabla w^{+}|^{p}+ V(x)|w^{+}|^{p}\right) \, dx + b \left(\int_{\R^{3}}|\nabla w^{+}|^{p}\, dx\right)^{2} \\
&\quad + \int_{\R^{3}} \left(c|\nabla w^{+}|^{q} + V(x)|w^{+}|^{q}\right)\, dx + d\left(\int_{\R^{3}}|\nabla w^{+}|^{q}\, dx\right)^{2} \\
&\qquad =\int_{\R^{3}} K(x) f(w^{+}) w^{+}\, dx
\end{align*}
and subtracting we have
\begin{align}\begin{split}\label{t3}
&\left(\frac{1}{\xi_{0}^{2q-p}}-1\right)\int_{\R^{3}} \left( a|\nabla w^{+}|^{p}+ V(x)|w^{+}|^{p}\right) \, dx + b \left(\frac{1}{\xi_{0}^{2q-2p}}-1\right) \left(\int_{\R^{3}}|\nabla w^{+}|^{p}\, dx\right)^{2} \\
&\quad + \left(\frac{1}{\xi_{0}^{q}}-1\right) \int_{\R^{3}} \left(c|\nabla w^{+}|^{q} + V(x)|w^{+}|^{q}\right)\, dx\\
&\qquad = \int_{\R^{3}} K(x) \left(\frac{f(\xi_{0}w^{+})}{(\xi_{0}w^{+})^{2q-1}}- \frac{f(w^{+})}{(w^{+})^{2q}}\right) (w^{+})^{2q}\, dx. 
\end{split}\end{align}
Using \eqref{t3} and $(f_{4})$ we get $\xi_{0}\geq 1$. 

Similarly, from $\frac{\partial g_{w}}{\partial \la}(\xi_{0}, \la_{0})=0$ we obtain
\begin{align*}
&\frac{1}{\la_{0}^{2q-p}} \int_{\R^{3}} \left( a|\nabla w^{-}|^{p}+ V(x)|w^{-}|^{p}\right) \, dx + \frac{b}{\la_{0}^{2(q-p)}} \left(\int_{\R^{3}}|\nabla w^{-}|^{p}\, dx\right)^{2} \\
&\quad + \frac{1}{\la_{0}^{q}} \int_{\R^{3}} \left(c|\nabla w^{-}|^{q} + V(x)|w^{-}|^{q}\right)\, dx + d\left(\int_{\R^{3}}|\nabla w^{-}|^{q}\, dx\right)^{2} \\
&\qquad =\int_{\R^{3}} K(x) \frac{f(\la_{0} w^{-})}{(\la_{0}w^{-})^{2q-1}} \, (w^{-})^{2q}\, dx.   
\end{align*}
Note that from $w\in \mathcal{M}$ we also deduce that 
\begin{align*}
&\int_{\R^{3}} \left( a|\nabla w^{-}|^{p}+ V(x)|w^{-}|^{p}\right) \, dx + b \left(\int_{\R^{3}}|\nabla w^{-}|^{p}\, dx\right)^{2} \\
&\quad + \int_{\R^{3}} \left(c|\nabla w^{-}|^{q} + V(x)|w^{-}|^{q}\right)\, dx + d\left(\int_{\R^{3}}|\nabla w^{-}|^{q}\, dx\right)^{2} \\
&\qquad =\int_{\R^{3}} K(x) f(w^{-}) w^{-}\, dx.   
\end{align*}
Subtracting these last two equality and using assumption $(f_{4})$ we get $0<\xi_{0}\leq \la_{0}\leq 1$.
Hence $\xi_{0}= \la_{0}=1$, this shows that $(1, 1)$ is the unique critical point of $g_{w}$ with positive coordinates. 

Next, take $u\in \E$ such that $u^{\pm}\neq 0$. Let $(\xi_{1}, \la_{1})$ and $(\xi_{2}, \la_{2})$ be two critical points of $g_{u}$ such that $\xi_{i}, \la_{i}>0$ for $i=1, 2$. 
Define
\begin{align*}
U_{1}=\xi_{1}u^{+}+\la_{1}u^{-} \quad \mbox{ and } \quad U_{2}= \xi_{2}u^{+}+ \la_{2}u^{-}. 
\end{align*}
Then we have that $U_{1}, U_{2}\in \mathcal{M}$ and $U_{1}^{\pm}\neq 0$. Furthermore, recalling that $\xi_{1}, \la_{1}>0$ we have
\begin{align*}
\frac{\xi_{2}}{\xi_{1}} U_{1}^{+} + \frac{\la_{2}}{\la_{1}} U_{1}^{-}= \frac{\xi_{2}}{\xi_{1}}\xi_{1}u^{+} + \frac{\la_{2}}{\la_{1}}\la_{1}u^{-}= \xi_{2}u^{+}+ \la_{2}u^{-}= U_{2} \in \mathcal{M}
\end{align*}
hence from $(i)$ we deduce that $\left( \frac{\xi_{2}}{\xi_{1}}, \frac{\la_{2}}{\la_{1}}\right)$ is a critical point of $g_{U_{1}}$. 
Due to the fact that $U_{1}\in \mathcal{M}$ we infer that $\frac{\xi_{2}}{\xi_{1}}= \frac{\la_{2}}{\la_{1}}=1$, that is $\xi_{1}= \xi_{2}$ and $\la_{1}= \la_{2}$, from which follows the uniqueness. 

Now we prove that $g_{u}$ has a maximum global point. Let $\Omega^{+}\subset \supp u^{+}$ and $\Omega^{-}\subset \supp u^{-}$ be positive with finite measure. Gathering \eqref{somma} with $(f_{3})$ and Fatou's lemma we get
\begin{align*}
g_{u}(\xi, \la)&= \I(\xi u^{+})+ \I(\la u^{-})\\
&=\left\{ \xi^{p} \int_{\R^{3}} \left(a|\nabla u^{+}|^{p}+ V(x)|u^{+}|^{p}\right)\, dx+ \frac{b}{2p}\xi^{2p}\left(\int_{\R^{3}} |\nabla u^{+}|^{p}\, dx \right)^{2} \right.\\
&\quad \left.+ \xi^{q} \int_{\R^{3}} \left(c|\nabla u^{+}|^{q}+ V(x)|u^{+}|^{q}\right)\, dx + \frac{d}{2q}\xi^{2q}\left(\int_{\R^{3}} |\nabla u^{+}|^{q}\, dx \right)^{2} -\int_{\Omega^{+}} K(x) F(\xi u^{+})\, dx \right\}\\
&+ \left\{ \la^{p}\int_{\R^{3}} \left(a|\nabla u^{-}|^{p}+ V(x)|u^{-}|^{p}\right)\, dx+ \frac{b}{2p}\la^{2p}\left(\int_{\R^{3}} |\nabla u^{-}|^{p}\, dx \right)^{2} \right. \\
&\quad \left.+ \la^{q} \int_{\R^{3}} \left(c|\nabla u^{-}|^{q}+ V(x)|u^{-}|^{q}\right)\, dx  + \frac{d}{2q}\la^{2q} \left(\int_{\R^{3}} |\nabla u^{-}|^{q}\, dx \right)^{2} -\int_{\Omega^{-}} K(x) F(\la u^{-})\, dx \right\}\\
&\leq \left\{ \xi^{p} \int_{\R^{3}} \left(a|\nabla u^{+}|^{p}+ V(x)|u^{+}|^{p}\right)\, dx+ \frac{b}{2p}\xi^{2p}\left(\int_{\R^{3}} |\nabla u^{+}|^{p}\, dx \right)^{2} \right.\\
&\quad \left.+ \xi^{q} \int_{\R^{3}} \left(c|\nabla u^{+}|^{q}+ V(x)|u^{+}|^{q}\right)\, dx + \frac{d}{2q}\xi^{2q}\left(\int_{\R^{3}} |\nabla u^{+}|^{q}\, dx \right)^{2} -C_{1} \xi^{2q}\int_{\Omega^{+}} K(x) (u^{+})^{2q}\, dx \right\}\\
&+ \left\{ \la^{p}\int_{\R^{3}} \left(a|\nabla u^{-}|^{p}+ V(x)|u^{-}|^{p}\right)\, dx+ \frac{b}{2p}\la^{2p}\left(\int_{\R^{3}} |\nabla u^{-}|^{p}\, dx \right)^{2} \right. \\
&\quad \left.+ \la^{q} \int_{\R^{3}} \left(c|\nabla u^{-}|^{q}+ V(x)|u^{-}|^{q}\right)\, dx  + \frac{d}{2q}\la^{2q} \left(\int_{\R^{3}} |\nabla u^{-}|^{q}\, dx \right)^{2} -C_{1}\la^{2q}\int_{\Omega^{-}} K(x) (u^{-})^{2q}\, dx \right\}\\
&+ C_{2}|K|_{\infty} (|\Omega^{+}|+ |\Omega^{-}|)\ri -\infty \quad \mbox{ as } |(\xi, \la)|\ri \infty.
\end{align*}
Combining the fact that $g_{u}$ is a continuous function with $g_{u}(\xi, \la)\ri -\infty$ as $|(\xi, \la)|\ri \infty$, we conclude that $g_{u}$ assumes a global maximum in $(\bar{\xi}, \bar{\la})\in (0, \infty)\times (0, \infty)$. Using \eqref{somma}, for any $\xi, \la \geq 0$ we get
\begin{align*}
g_{u}(\xi, 0)+ g_{u}(0, \la)= \I(\xi u^{+}) + \I(\la u^{-})=\I(\xi u^{+}+ \la u^{-})=g_{u}(\xi, \la), 
\end{align*}
therefore
\begin{align*}
0<\max_{\xi\geq 0} g_{u}(\xi, 0) < \max_{\xi, \la \geq 0} g_{u}(\xi, \la)=g_{u}(\bar{\xi}, \bar{\la}) 
\end{align*}
and 
\begin{align*}
0<\max_{\la\geq 0} g_{u}(0, \la) < \max_{\xi, \la\geq 0} g_{u}(\xi, \la)=g_{u}(\bar{\xi}, \bar{\la})
\end{align*}
showing that $(\bar{\xi}, \bar{\la})\in (0, \infty)\times (0, \infty)$. By virtue of the uniqueness of the critical point of $g_{u}$ we have that $(\xi_{+}, \la_{-})= (\bar{\xi}, \bar{\la})$, hence $(\xi_{+}, \la_{-})$ is the unique global maximum of $g_{u}$. 

\noindent
$(iii)$ From Lemma \ref{lemz1}-$(a)$ we get $\frac{\partial g_{u}}{\partial \xi}(r, \la_{-})>0$ if $r\in (0, \xi_{+})$, $\frac{\partial g_{u}}{\partial \xi}(\xi_{+}, \la_{-})=0$ and $\frac{\partial g_{u}}{\partial \xi}(r, \la_{-})>0$ if $r\in (\xi_{+}, \infty)$. Similarly for $a_{-}(r)$.
\end{proof}

Proceeding as in \cite{HMH} we can prove the following lemma. 
\begin{lem}\label{lem3.3}
If $(u_{n})\subset \mathcal{M}$ and $u_{n}\rightharpoonup u$ in $\E$, then $u\in \E$ and $u^{\pm} \neq 0$.
\end{lem}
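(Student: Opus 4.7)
The conclusion $u\in \E$ is automatic since $\E$ is a reflexive Banach space and weak limits stay in $\E$; hence the real content is $u^{+}\neq 0$ and $u^{-}\neq 0$. My plan has three steps: (i) extract a uniform lower bound $\|u_{n}^{\pm}\|\geq \alpha>0$ from the Nehari identities $\langle \I'(u_{n}), u_{n}^{\pm}\rangle=0$; (ii) pass to weak limits to obtain $u_{n}^{\pm}\rih u^{\pm}$ in $\E$; (iii) combine Lemma \ref{lemconvergFf} with the lower bound from (i) to rule out $u^{+}=0$ (and symmetrically $u^{-}=0$).

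For step (i), after discarding the nonnegative Kirchhoff contributions the Nehari identity yields
\begin{align*}
& \int_{\R^{3}} (a|\nabla u_{n}^{\pm}|^{p} + V(x)|u_{n}^{\pm}|^{p})\, dx + \int_{\R^{3}} (c|\nabla u_{n}^{\pm}|^{q} + V(x)|u_{n}^{\pm}|^{q})\, dx \\
&\qquad \leq \int_{\R^{3}} K(x)\, f(u_{n}^{\pm})\, u_{n}^{\pm}\, dx.
\end{align*}
Combining $(f_{1})$ (or $(\tilde{f}_{1})$) with $(f_{2})$ gives $|f(t)t|\leq \e|t|^{\sigma_{1}}+C_{\e}|t|^{q^{*}}$, with $\sigma_{1}=2p$ under $(VK_{3})$ and $\sigma_{1}=m$ under $(VK_{4})$; note that $\sigma_{1},q^{*}>q$ (in particular $2p>q$ because $p>\frac{3}{2}$ and $q<3$). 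Using Lemma \ref{lem1} together with $K\in L^{\infty}(\R^{3})$ and the Sobolev embedding $\mathcal{D}^{1,q}(\R^{3})\hookrightarrow L^{q^{*}}(\R^{3})$ to handle the $q^{*}$-piece, the right-hand side is dominated by $\e C\|u_{n}^{\pm}\|^{\sigma_{1}}+C_{\e}\|u_{n}^{\pm}\|^{q^{*}}$. A case split according to whether the $p$-summand or the $q$-summand of $\|u_{n}^{\pm}\|$ dominates bounds the left-hand side below by $c_{0}(\|u_{n}^{\pm}\|/2)^{q}$ whenever $\|u_{n}^{\pm}\|\leq 1$. Since every exponent on the right strictly exceeds $q$, choosing $\e$ small forces the desired uniform lower bound $\|u_{n}^{\pm}\|\geq \alpha>0$.

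For steps (ii)--(iii), up to a subsequence $u_{n}\ri u$ a.e. (via the local compactness of the Sobolev embedding), whence $u_{n}^{\pm}\ri u^{\pm}$ a.e.; boundedness of $(u_{n}^{\pm})$ in $\E$ (from $|u_{n}^{\pm}|\leq |u_{n}|$ and $|\nabla u_{n}^{\pm}|\leq |\nabla u_{n}|$) together with reflexivity identifies the weak limit, so $u_{n}^{\pm}\rih u^{\pm}$ in $\E$. Then Lemma \ref{lemconvergFf} applied to $(u_{n}^{\pm})$ produces $\int_{\R^{3}} K(x)f(u_{n}^{\pm})u_{n}^{\pm}\, dx\ri \int_{\R^{3}} K(x)f(u^{\pm})u^{\pm}\, dx$. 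If we had $u^{+}=0$ the right-hand side of the Nehari inequality would tend to $0$, forcing $\|u_{n}^{+}\|\ri 0$ and contradicting step (i); symmetric reasoning gives $u^{-}\neq 0$.

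The principal obstacle is step (i): the norm $\|\cdot\|$ mixes a $p$-homogeneous and a $q$-homogeneous piece, so no single power of $\|u\|$ coerces the left-hand side of the Nehari identity uniformly. Resolving this requires exploiting $p>\frac{3}{2}$, which forces $2p>3>q$ and thus ensures that every exponent on the right of the $f(t)t$ estimate strictly beats the worst-case exponent $q$ extracted from the left, which is precisely what the case-split argument needs to close.
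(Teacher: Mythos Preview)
Your argument is correct and is exactly the standard route the paper has in mind when it defers to \cite{HMH}: a uniform lower bound on $\|u_{n}^{\pm}\|$ from the Nehari identities via the growth of $f$ and the embeddings of Lemma~\ref{lem1}, followed by Lemma~\ref{lemconvergFf} to exclude $u^{\pm}=0$. The only small point left implicit is that under $(VK_{3})$ one needs $2p\le q^{*}$ to invoke Lemma~\ref{lem1}(i) at exponent $2p$; this holds because $p>\tfrac{3}{2}$ gives $q>p>\tfrac{6p}{3+2p}$, i.e.\ $2p<\tfrac{3q}{3-q}=q^{*}$.
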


Now, we denote by 
\begin{align}\label{defncinfty}
c_{\infty}= \inf_{u\in \mathcal{M}} \I(u). 
\end{align}
From $\mathcal{M}\subset \mathcal{N}$ it follows that $c_{\infty}\geq d_{\infty}$>0. 

\section{Proof of Theorem \ref{thm1}}\label{sect5}

Let $(u_{n})\subset \mathcal{M}$ be such that 
\begin{align}\label{t4}
\I(u_{n})\ri c_{\infty} \quad \mbox{ in } \R.
\end{align}
First we show that $(u_{n})$ is bounded in $\E$. Suppose that there exists a subsequence still denoted by $(u_{n})$ such that 
$$
\|u_{n}\|\ri \infty \mbox{ as } n\ri \infty. 
$$
Set $v_{n}= \frac{u_{n}}{\|u_{n}\|}$ for all $n\in \mathbb{N}$. Hence $(v_{n})$ is bounded in $\E$ so by Lemma \ref{lem2} we may assume that 
\begin{align}\begin{split}\label{convergenze}
&v_{n}\rightharpoonup v \mbox{ in } \E, \\
&v_{n}\ri v \mbox{ a.e. in } \R^{3}, \\
&v_{n}\ri v \mbox{ in } L^{r}(\R^{3}) \mbox{ for } r\in (q, q^{*}). 
\end{split}\end{align}
Now, from $u_{n}= \|u_{n}\|v_{n}$ it follows that 
\begin{align*}
\|u_{n}\|v_{n}^{+} + \|u_{n}\|v_{n}^{-}= \|u_{n}\|v_{n}=u_{n}\in \mathcal{M}
\end{align*} 
and by Lemma \ref{lem3.2} we have $\xi_{+}(v_{n})=\la_{-}(v_{n})= \|u_{n}\|$. Recalling that $(\xi_{+}, \la_{-})$ is the unique global maximum point of $g_{v_{n}}$ with positive coordinates, for any $\xi>0$ we infer
\begin{align}\begin{split}\label{t5}
\I(u_{n})&= \I(\|u_{n}\|v_{n}) = \I(\xi_{+}(v_{n}) v_{n}^{+}+ \la_{-}(v_{n}) v_{n}^{-})\\
&=g_{v_{n}}(\xi_{+}(v_{n}), \la_{-}(v_{n})) \geq g_{v_{n}}(\xi, \xi) = \I(\xi v_{n})\\
&= \frac{\xi^{p}}{p} \int_{\R^{3}} \left( a|\nabla v_{n}|^{p}+V(x)|v_{n}|^{p}\right)\, dx + \frac{b}{2p}\xi^{2p}\left(\int_{\R^{3}} |\nabla v_{n}|^{p}\, dx \right)^{2} \\
& + \frac{\xi^{q}}{q} \int_{\R^{3}} \left( c|\nabla v_{n}|^{q}+V(x)|v_{n}|^{q}\right)\, dx + \frac{d}{2q}\xi^{2q}\left(\int_{\R^{3}} |\nabla v_{n}|^{q}\, dx \right)^{2} - \int_{\R^{3}} K(x) F(\xi v_{n})\, dx. 
\end{split}\end{align}
Note that $\|v_{n}\|=1$, hence 
\begin{align*}
\int_{\R^{3}} \left( a|\nabla v_{n}|^{p}+V(x)|v_{n}|^{p}\right)\, dx \leq 1 \mbox{ and } \int_{\R^{3}} \left( c|\nabla v_{n}|^{q}+V(x)|v_{n}|^{q}\right)\, dx \leq 1. 
\end{align*}
Using $1<p<q$, and assuming without loss of generality that $\xi>1$ so that $\xi^{q}>\xi^{p}$, and exploiting the following inequality 
$$
a^{q}+ b^{q}\geq C_{q}(a+b)^{q} \mbox{ for all } a, b\geq 0 \mbox{ and } q>1
$$
from \eqref{t5} we deduce 
\begin{align}\begin{split}\label{t6}
\I(u_{n}) &\geq \frac{\xi^{p}}{p} \int_{\R^{3}} \left( a|\nabla v_{n}|^{p}+V(x)|v_{n}|^{p}\right)\, dx + \frac{\xi^{q}}{q} \int_{\R^{3}} \left( c|\nabla v_{n}|^{q}+V(x)|v_{n}|^{q}\right)\, dx - \int_{\R^{3}} K(x) F(\xi v_{n})\, dx\\
&\geq \frac{\xi^{p}}{q}C_{q} \|v_{n}\|^{q}- \int_{\R^{3}} K(x) F(\xi v_{n})\, dx\\
&= \frac{\xi^{p}}{q}C_{q}- \int_{\R^{3}} K(x) F(\xi v_{n})\, dx.
\end{split}\end{align}

\noindent
Assume by contradiction that $v=0$. From \eqref{convergenze} and Lemma \ref{lemconvergFf} we deduce that for any $\xi>0$ 
\begin{align}\label{t7}
\lim_{n\ri \infty} \int_{\R^{3}} K(x) F(\xi v_{n})\, dx=0. 
\end{align}
Taking the limit in \eqref{t6}, and using \eqref{t4} and \eqref{t7} we have
\begin{align*}
c_{\infty}\geq \frac{\xi^{p}}{q}C_{q} \quad \mbox{ for any } \xi>1, 
\end{align*}
which gives a contradiction. Therefore $v\neq 0$.

On the other hand
\begin{align}\begin{split}\label{t8}
\frac{\I(u_{n})}{\|u_{n}\|^{2q}}&=\frac{1}{p\|u_{n}\|^{2q}} \int_{\R^{3}} \left(a|\nabla u_{n}|^{p}+V(x)|u_{n}|^{p}\right)\, dx +\frac{b}{2p\|u_{n}\|^{2q}} \left(\int_{\R^{3}} |\nabla u_{n}|^{p}\, dx \right)^{2}\\
&\quad + \frac{1}{q\|u_{n}\|^{2q}} \int_{\R^{3}} \left(c|\nabla u_{n}|^{q}+V(x)|u_{n}|^{q}\right)\, dx +\frac{d}{2q\|u_{n}\|^{2q}} \left(\int_{\R^{3}} |\nabla u_{n}|^{q}\, dx \right)^{2}\\
&\quad - \int_{\R^{3}} K(x) \frac{F(u_{n})}{\|u_{n}\|^{2q}}\, dx\\
&\leq \frac{1}{p\|u_{n}\|^{2q-p}} + \frac{b}{2p\|u_{n}\|^{2(q-p)}} +\frac{1}{q\|u_{n}\|^{q}}+ \frac{d}{2q}- \int_{\R^{3}} K(x) \frac{F(\|u_{n}\|v_{n})}{(\|u_{n}\|v_{n})^{2q}} \, \left(\frac{u_{n}}{\|u_{n}\|}\right)^{2q} \, dx. 
\end{split}\end{align}
Combining assumption $(f_{3})$ together with Fatou's lemma we get
\begin{align}\label{t9}
\lim_{n\ri \infty} \int_{\R^{3}} K(x) \frac{F(\|u_{n}\|v_{n})}{(\|u_{n}\|v_{n})^{2q}} \, \left(\frac{u_{n}}{\|u_{n}\|}\right)^{2q} \, dx =+\infty
\end{align}
so taking the limit in \eqref{t8} we get a contradiction in view of \eqref{t4}, $\|u_{n}\|\ri \infty$ as $n\ri \infty$ and \eqref{t9}. So $(u_{n})$ is a bounded sequence in $\E$ and there exists $u\in \E$ such that $u_{n}\rightharpoonup u$ in $\E$. From Lemma \ref{lem3.3} we have $u^{\pm}\neq 0$ and by Lemma \ref{lem3.2} there are $\xi_{+}, \la_{-}>0$ such that $\xi_{+}u^{+}+\la_{-}u^{-}\in \mathcal{M}$, from which 
\begin{align}\begin{split}\label{t10}
&\frac{1}{\la_{-}^{2q-p}} \int_{\R^{3}} \left( a|\nabla u^{-}|^{p}+ V(x)|u^{-}|^{p}\right) \, dx + \frac{b}{\la_{-}^{2(q-p)}} \left(\int_{\R^{3}}|\nabla u^{-}|^{p}\, dx\right)^{2} \\
&\quad + \frac{1}{\la_{-}^{q}} \int_{\R^{3}} \left(c|\nabla u^{-}|^{q} + V(x)|u^{-}|^{q}\right)\, dx + d\left(\int_{\R^{3}}|\nabla u^{-}|^{q}\, dx\right)^{2} \\
&\qquad =\int_{\supp u^{-}} K(x) \frac{f(\la_{-} u^{-})}{(\la_{-}u^{-})^{2q-1}} \, (u^{-})^{2q}\, dx.   
\end{split}\end{align} 

Our aim is to prove that $\xi_{+}=\la_{-}=1$. Without loss of generality, let us suppose that $0<\xi_{+}\leq \la_{-}$. First we prove that $0<\xi_{+}\leq \la_{-}\leq 1$. 
Note that from $u_{n}\rightharpoonup u$ in $\E$ and exploiting Lemma \ref{lemconvergFf} we have 
\begin{align}\label{convf}
\lim_{n\ri \infty} \int_{\R^{3}} K(x) f(u_{n}^{\pm})\, u_{n}^{\pm}\, dx = \int_{\R^{3}} K(x) f(u^{\pm})\, u^{\pm}\, dx 
\end{align} 
and also
\begin{align}\label{convF}
\lim_{n\ri \infty} \int_{\R^{3}} K(x) F(u_{n}^{\pm})\, dx = \int_{\R^{3}} K(x) F(u^{\pm})\, dx, 
\end{align}
and combining $(u_{n})\subset \mathcal{M}$ with Fatou's lemma we get
\begin{align*}
\langle \I'(u), u^{\pm}\rangle \leq \liminf_{n\ri \infty} \langle \I'(u_{n}), u_{n}^{\pm}\rangle =0, 
\end{align*}
which yields
\begin{align}\begin{split}\label{t11}
&\int_{\R^{3}} \left( a|\nabla u^{-}|^{p}+ V(x)|u^{-}|^{p}\right) \, dx + b \left(\int_{\R^{3}}|\nabla u^{-}|^{p}\, dx\right)^{2} \\
&\quad + \int_{\R^{3}} \left(c|\nabla u^{-}|^{q} + V(x)|u^{-}|^{q}\right)\, dx + d\left(\int_{\R^{3}}|\nabla u^{-}|^{q}\, dx\right)^{2} \\
&\qquad \leq \int_{\supp u^{-}} K(x) f(u^{-})\, u^{-}dx.   
\end{split}\end{align}
Subtracting \eqref{t10} and \eqref{t11} we obtain 
\begin{align*}
&\left(\frac{1}{\la_{-}^{2q-p}} -1\right) \int_{\R^{3}} \left( a|\nabla u^{-}|^{p}+ V(x)|u^{-}|^{p}\right) \, dx +b \left(\frac{1}{\la_{-}^{2(q-p)}} -1\right)\left(\int_{\R^{3}}|\nabla u^{-}|^{p}\, dx\right)^{2} \\
&\quad + \left(\frac{1}{\la_{-}^{q}}-1\right) \int_{\R^{3}} \left(c|\nabla u^{-}|^{q} + V(x)|u^{-}|^{q}\right)\, dx \\
&\qquad \geq \int_{\supp u^{-}} K(x) \left(\frac{f(\la_{-} u^{-})}{(\la_{-}u^{-})^{2q-1}}- \frac{u^{-}}{(u^{-})^{2q-1}}\right) \, (u^{-})^{2q}\, dx
\end{align*} 
and using assumption $(f_{3})$ we deduce $0<\la_{-}\leq1$. Hence, $0<\xi_{+}\leq \la_{-}\leq 1$. 

Next we show that 
\begin{align}\label{t12}
\I(\xi_{+}u^{+}+\la_{-}u^{-})= c_{\infty}. 
\end{align}

Now, from \eqref{defncinfty}, $0<\xi_{+}\leq \la_{-}\leq 1$, assumption $(f_{4})$, \eqref{convf} and \eqref{convF} we obtain 

\begin{align*}
c_{\infty}&\leq \I(\xi_{+}u^{+}+\la_{-}u^{-})= \I(\xi_{+}u^{+}+\la_{-}u^{-})- \frac{1}{2q} \langle \I'(\xi_{+}u^{+}+\la_{-}u^{-}), \xi_{+}u^{+}+\la_{-}u^{-}\rangle \\
&=\left(\frac{1}{p}-\frac{1}{2q}\right) \int_{\R^{3}} \left( a|\nabla(\xi_{+}u^{+}+\la_{-}u^{-})|^{p}+ V(x)|\xi_{+}u^{+}+\la_{-}u^{-}|^{p}\right)\, dx \\
&\quad +\frac{b}{2}\left(\frac{1}{p}-\frac{1}{q}\right) \left(\int_{\R^{3}} |\nabla (\xi_{+}u^{+}+\la_{-}u^{-})|^{p}\, dx \right)^{2} \\
&\quad + \frac{1}{2q} \int_{\R^{3}} \left(c|\nabla (\xi_{+}u^{+}+\la_{-}u^{-})|^{q} + V(x)|\xi_{+}u^{+}+\la_{-}u^{-}|^{q}\right)\, dx\\
&\quad+\int_{\R^{3}} K(x) \left(\frac{1}{2q} f(\xi_{+}u^{+}) \,(\xi_{+}u^{+})- F(\xi_{+}u^{+})\right)\, dx +\int_{\R^{3}} K(x) \left(\frac{1}{2q} f(\la_{-}u^{-}) \,(\la_{-}u^{-})- F(\la_{-}u^{-})\right)\, dx \\   
&=\left(\frac{1}{p}-\frac{1}{2q}\right) \int_{\R^{3}} \left( a|\nabla u|^{p}+ V(x)|u|^{p}\right)\, dx +\frac{b}{2}\left(\frac{1}{p}-\frac{1}{q}\right) \left(\int_{\R^{3}} |\nabla u|^{p}\, dx \right)^{2} \\
&\quad + \frac{1}{2q} \int_{\R^{3}} \left(c|\nabla u|^{q} + V(x)|u|^{q}\right)\, dx\\
&\quad+\int_{\R^{3}} K(x) \left(\frac{1}{2q} f(u^{+}) \,u^{+}- F(u^{+})\right)\, dx +\int_{\R^{3}} K(x) \left(\frac{1}{2q} f(u^{-}) \,u^{-}- F(u^{-})\right)\, dx \\   
&=\left(\frac{1}{p}-\frac{1}{2q}\right) \int_{\R^{3}} \left( a|\nabla u|^{p}+ V(x)|u|^{p}\right)\, dx +\frac{b}{2}\left(\frac{1}{p}-\frac{1}{q}\right) \left(\int_{\R^{3}} |\nabla u|^{p}\, dx \right)^{2} \\
&\quad + \frac{1}{2q} \int_{\R^{3}} \left(c|\nabla u|^{q} + V(x)|u|^{q}\right)\, dx +\int_{\R^{3}} K(x) \left(\frac{1}{2q} f(u) \,u- F(u)\right)\, dx \\
&\leq \liminf_{n\ri \infty} \left\{ \left(\frac{1}{p}-\frac{1}{2q}\right) \int_{\R^{3}} \left( a|\nabla u_{n}|^{p}+ V(x)|u_{n}|^{p}\right)\, dx +\frac{b}{2}\left(\frac{1}{p}-\frac{1}{q}\right) \left(\int_{\R^{3}} |\nabla u_{n}|^{p}\, dx \right)^{2} \right.  \\
&\quad \left.+ \frac{1}{2q} \int_{\R^{3}} \left(c|\nabla u_{n}|^{q} + V(x)|u_{n}|^{q}\right)\, dx +\int_{\R^{3}} K(x) \left(\frac{1}{2q} f(u_{n}) \,u- F(u_{n})\right)\, dx\right\} \\
&=\liminf_{n\ri \infty} \left\{ \I(u_{n})-\frac{1}{2q}\langle \I'(u_{n}), u_{n}\rangle \right\}=c_{\infty}
\end{align*}
which implies that \eqref{t12} holds true. In particular it follows that $\xi_{+}=\la_{-}=1$. 

Next, we prove that the minimum point $u=u^{+}+u^{-}$ is a critical point of $\I$. Assume by contradiction that $\I'(u)\neq 0$. Then, due to the continuity of $\I'$ we can find $\alpha, \beta>0$ such that $\| \I'(v)\|\geq \beta$ for all $v\in \E$ with $\|v-u\|\leq 3 \alpha$. 

Define $D= [\frac{1}{2}, \frac{3}{2}]\times [\frac{1}{2}, \frac{3}{2}]$ and $\E^{\pm}= \{u\in \E : u^{\pm}\neq 0\}$, and let us consider the function $G_{u}: D\rightarrow \E^{\pm}$ defined by setting
$$
G_{u}(\xi, \la)= \xi u^{+}+ \la u^{-}.
$$ 
Using Lemma \ref{lem3.2} we can see that $\I(G_{u}(1, 1))=c_{\infty}$ and $\I(G_{u}(\xi, \la))<c_{\infty}$ in $D\setminus\{(1,1)\}$. 

Define 
$$
\tau=\max_{(\xi, \la)\in \partial D} \I(G_{u}(\xi, \la)),
$$ 
then $\tau <c_{\infty}$. 

Set $\tilde{\mathcal{S}}=\{v\in \E : \|v-u\|\leq \alpha \}$ and choose $\e=\min \left\{ \frac{1}{4}(c_{\infty}-\gamma), \frac{\alpha \beta}{8}\right\}$. By Theorem 2.3 in \cite{Willem} there exists a deformation $\eta\in \C([0,1]\times \E, \E)$ such that the following assertions hold:
\smallskip
\begin{compactenum}[(a)]
\item $\eta(\xi,v)= v$ if $v \not\in \E^{-1}([c_{\infty}-2\varepsilon, c_{\infty}+2\varepsilon])$;
\item $\I(\eta(1,v))\leq c_{\infty}-\varepsilon$ for each $v\in \E$ with $\|v-u\|\leq \alpha$ and $\I(v)\leq c_{\infty}+\varepsilon$;
\item $\I(\eta(1,v))\leq \I(\eta(0, v))=\I(v)$ for all $v\in \E$.
\end{compactenum}

From $(b)$ and $(c)$ we get
\begin{equation}\label{c24}
\max_{(\xi, \la)\in \partial D} \I(\eta(1, G_{u}(\xi, \la))) <c_{\infty}.
\end{equation}
Now we prove that 
\begin{equation}\label{c25}
\eta(1, G_{u}(D)) \cap \mathcal{M}\neq \emptyset
\end{equation}
because the definition of $c_{\infty}$ and (\ref{c25}) contradict (\ref{c24}). 

Let us define 
\begin{align*}
&\Phi_{u}(\xi, \la)= \eta(1, G_{u}(\xi, \la)), \\
&\psi_{0}(\xi, \la)= \left(\langle \I'(G_{u}(\xi,1)), \xi u^{+}\rangle, \langle \I'(G_{u}(1,\la)), \la u^{-}\rangle \right) \\
&\psi_{1}(\xi, \la)= \left(\frac{1}{\xi} \langle \I'(\Phi_{u}(\xi,1)), \Phi_{u}^{+}(\xi,1)\rangle, \frac{1}{\la} \langle \I'(\Phi_{u}(1,\la)), \Phi_{u}^{-}(1,\la)\rangle \right).
\end{align*}

Exploiting Lemma \ref{lem3.2}-$(iii)$, the $\C^{1}$ function $\gamma_{+}(\xi)= g_{u}(\xi, 1)$ 
has a unique global maximum point $\xi=1$. 
By density, given $\e>0$ small enough, there is $\gamma_{+, \varepsilon}\in \C^{\infty}([\frac{1}{2}, \frac{3}{2}])$ such that $\|\gamma_{+} - \gamma_{+, \varepsilon}\|_{\C^{1}([\frac{1}{2}, \frac{3}{2}])}<\varepsilon$ with $\xi_{+}$ being the unique maximum global point of $\gamma_{+, \varepsilon}$ in $[\frac{1}{2}, \frac{3}{2}]$. Hence, $\|\gamma'_{+} - \gamma'_{+, \varepsilon}\|_{\C([\frac{1}{2}, \frac{3}{2}])}<\varepsilon$, $\gamma'_{+, \varepsilon}(1)=0$ and $\gamma''_{+, \varepsilon}(1)<0$. Analogously, set $\gamma_{-}(\la)=g_{u}(1,\la)$, then there exists $\gamma_{-, \varepsilon}\in \C^{\infty}([\frac{1}{2}, \frac{3}{2}])$ such that $\|\gamma'_{-} - \gamma'_{-, \varepsilon}\|_{\C([\frac{1}{2}, \frac{3}{2}])}<\varepsilon$, $\gamma'_{+, \varepsilon}(1)=0$ and $\gamma''_{+, \varepsilon}(1)<0$. 

Let us define $\psi_{\varepsilon}\in \C^{\infty}(D)$ by setting 
$$
\psi_{\varepsilon}(\xi, \la)= (\xi \gamma'_{+, \varepsilon}(\xi), \la \gamma'_{-, \varepsilon}(\la)).
$$ 
We note that $\|\psi_{\varepsilon} - \psi_{0} \|_{\C(D)}<\frac{3\sqrt{2}}{2} \varepsilon$, $(0,0) \not \in \psi_{\varepsilon}(\partial D)$, and $(0,0)$ is a regular value of $\psi_{\varepsilon}$ in $D$.

Since $(1,1)$ is the unique solution of $\psi_{\varepsilon}(\xi, \la)=(0,0)$ in $D$, by the definition of Brouwer's degree, we can infer that, for $\varepsilon$ small enough, it holds 
\begin{equation}\label{pv1}
\rm{deg}(\psi_{0}, D, (0,0))= \rm{deg}(\psi_{\varepsilon}, D, (0,0))= \rm{sgn} \, \rm{Jac}(\psi_{\varepsilon})(1,1), 
\end{equation}
where $\rm{Jac}(\psi_{\varepsilon})$ is the Jacobian determinant of $\psi_{\varepsilon}$ and $\rm{sgn}$ denotes the sign function. 

We note that
\begin{align}\label{pv2}
\rm{Jac}(\psi_{\varepsilon})(1,1)= [\gamma'_{+, \varepsilon}(1)+\gamma''_{+, \varepsilon}(1)]\times [\gamma'_{-, \varepsilon}(1)+ \gamma''_{-, \varepsilon}(1)]= \gamma''_{+, \varepsilon}(1)\times \gamma''_{-, \varepsilon}(1)>0, 
\end{align}
so combining \eqref{pv1} with \eqref{pv2} we find
\begin{align*}
\rm{deg}(\psi_{0}, D, (0,0))= \rm{sgn}[\gamma''_{+, \varepsilon}(1)\times \gamma''_{-, \varepsilon}(1)]=1. 
\end{align*}

By the definition of $\tau$ and the fact that $\e=\min \left\{ \frac{1}{4}(c_{\infty}-\gamma), \frac{\alpha \beta}{8}\right\}$ we have that for any $(\xi, \la)\in \partial D$
\begin{align*}
\I(G_{u}(\xi, \la))&\leq \max_{(\xi, \la)\in \partial D} \I(G_{u}(\xi, \la))<\frac{1}{2}(\tau + c_{\infty})=c_{\infty} - 2 \left( \frac{c_{\infty}-\tau}{4} \right)\leq c_{\infty}-2\varepsilon. 
\end{align*}
This and $(a)$ yields that $G_{u}=\Phi_{u}$ on $\partial D$. Therefore, $\psi_{1}=\psi_{0}$ on $\partial D$ and consequently
\begin{equation*}
\rm{deg}(\psi_{1}, D, (0,0))= \rm{deg}(\psi_{0}, D, (0,0))=1,
\end{equation*}
which shows that $\psi_{1}(\xi, \la)=(0,0)$ for some $(\xi, \la)\in D$.

Now, in order  to verify that (\ref{c25}) holds, we prove that
\begin{equation}\label{fig}
\psi_{1}(1, 1)= \left(\langle \I'(\Phi_{u}(\xi,1)), \Phi_{u}(1,1)^{+}\rangle, \langle \I'(\Phi_{u}(1,1)), \Phi_{u}(1,1)^{-}\rangle \right)=0.
\end{equation}
As a matter of fact, (\ref{fig}) and the fact that $(1, 1)\in D$, yield $\Phi_{u}(1, 1)=\eta(1, G_{u}(1, 1))\in \mathcal{M}$.

We argue as follows. If the zero $(\xi, \la)$ of $\psi_{1}$ obtained above is equal to $(1, 1)$ there is nothing to prove. Otherwise, we take $0<\delta_{1}<\min\{|\xi-1|, |\la-1|\}$ and consider 
$$
D_{1}=\left[1-\frac{\delta_{1}}{2}, 1+\frac{\delta_{1}}{2}\right]\times \left[1-\frac{\delta_{1}}{2}, 1+\frac{\delta_{1}}{2}\right].
$$

Therefore $(\xi, \la)\in D \setminus D_{1}$. Hence, we can repeat for $D_{1}$ the same argument used for $D$, so that we can find a couple $(\xi_{1}, \la_{1})\in D_{1}$ such that $\psi_{1}(\xi_{1}, \la_{1}) = 0$. If $(\xi_{1}, \la_{1})=(1, 1)$, there is nothing to prove. Otherwise, we can continue with this procedure and find in the $n$-th step that (\ref{fig}) holds, or produce a sequence $(\xi_{n}, \la_{n})\in D_{n-1}\setminus D_{n}$ which converges to $(1, 1)$ and such that
\begin{equation}\label{italo}
\psi_{1}(\xi_{n}, \la_{n})=0,\quad \mbox{ for every } n\in \mathbb{N}.
\end{equation}
Thus, taking the limit as $n \rightarrow \infty$ in (\ref{italo}) and using the continuity of $\psi_{1}$ we get (\ref{fig}).
Therefore, $u=u^{+}+u^{-}$ is a critical point of $\I$.

Finally, we consider the case when $f$ is odd. Clearly, the functional $\psi$ is even. In the light of (\ref{z9}) and $c_{\infty}\geq d_{\infty}>0$ we can see that $\psi$ is bounded from below in $\mathbb{S}$. Moreover, using Lemma \ref{lem2} and Lemma \ref{lemconvergFf}, we deduce that $\psi$ satisfies the Palais-Smale condition on $\mathbb{S}$. Hence, applying Proposition \ref{propz2} and \cite{Rab}, we conclude that $\I$ has infinitely many critical points.

\begin{comment}
\section*{Acknowledgments.}
The author warmly thank the anonymous referee for the careful reading of the manuscript, and for her/his useful and nice comments.
\end{comment}

\section*{Acknowledgments.}
%The authors authors warmly thank the anonymous referee for her/his useful and nice comments on the paper.
The first author was partly supported by the GNAMPA Project 2020 entitled: {\it Studio Di Problemi Frazionari Nonlocali Tramite Tecniche Variazionali}.
The second author was partly supported by Slovenian research agency grants P1-0292, N1-0114, N1-0083, N1-0064 and J1-8131.

\addcontentsline{toc}{section}{\refname}

\end{document}